\newcommand{\bsx}{\boldsymbol{x}}%
\newcommand{\bsy}{\boldsymbol{y}}%
\newcommand{\bsz}{\boldsymbol{z}}%
\newcommand{\bsc}{\boldsymbol{c}}%
\newcommand{\bsT}{\boldsymbol{T}}%
\newcommand{\bszero}{\boldsymbol{0}}%
\newcommand{\NN}{\mathbb N}
\newcommand{\ZZ}{\mathbb Z}
\newcommand{\QQ}{\mathbb Q}
\newcommand{\PP}{\mathbb P}
\newcommand{\FF}{\mathbb{F}}
\newtheorem{theorem}{Theorem}
\newtheorem{lemma}{Lemma}
\newtheorem{example}{Example}
\newtheorem{prop}{Proposition}
\newtheorem{defi}{Definition}
\begin{document}

\title{Kronecker-Halton sequences in $\FF_p((X^{-1}))$}
\author       {Roswitha Hofer\thanks{supported by the Austrian Science Fund (FWF):
Project F5505-N26, which is a part of the Special Research Program
``Quasi-Monte Carlo Methods: Theory and Applications''}}

\maketitle

\begin{abstract}
In this paper we investigate the distribution properties of hybrid sequences which are made by combining Halton sequences in the ring of polynomials and digital Kronecker sequences. We give a full criterion for the uniform distribution and prove results on the discrepancy of such hybrid sequences. 

\end{abstract}
\vskip0.5cm%
\maketitle
\vskip0.5cm%
\noindent{Keywords: hybrid sequences, digital Kronecker sequences, Halton-type sequences, discrepancy}\\
\noindent{MSC2010:} 11K31, 11K38

\section{Preliminaries}

Let $(\bsz_n)_{n\geq 0}$ be a sequence in the $s$-dimensional unit cube $[0,1)^s$, then the \emph{discrepancy} $D_N$ of the first $N$ points of the sequence is defined by 
$$D_N=\sup_{B\subseteq [0,1)^s}\left|\frac{A_N(B)}{N}-\lambda(B)\right|$$
where $$A_N(B):=\#\{n:0\leq n<N, \bsz_n\in B\},$$
$\lambda$ is the $s$-dimensional Lebesgue measure and the supremum is taken over all axis-parallel subintervals $B\subseteq [0,1)^s$. 
When restricting the supremum over all axis-parallel subintervals with the lower left point in the origin, then we obtain the \emph{star discrepancy} $D_N^*$ of the first $N$ points of the sequence. It is easy to see that 
$D_N^*\leq D_N\leq 2^sD_N^*$. 
The sequence $(\bsz_n)_{n\geq 0}$ is called \emph{uniformly distributed} if $\lim_{N\to\infty}D_N=0$. 

It is frequently conjectured in the theory of irregularities of distribution, that for every sequence $(\bsz_n)_{n\geq 0}$ in $[0,1)^s$ we have 
$$D_N\geq c_s\frac{\log^sN}{N}$$
for a constant $c_s>0$ and for infinitely many $N$. In the following we will abbreviate this to $D_N\gg_s \frac{\log^sN}{N}$. Therefore sequences whose discrepancy satisfies $D_N\leq C_s \log^sN/N$ for all $N$ with a constant $C_s>0$ that is independent of $N$ (or $D_N\ll_s \log^sN/N$), are called \emph{low-discrepancy sequences}. 

Well-known examples of low-discrepancy sequences are the $s$-dimensional Halton sequences, digital $(t,s)$-sequences, and one-dimensional Kronecker sequences $(\{n\alpha\})_{n\geq 0}$ with $\alpha$ irrational and having bounded continued fraction coefficients. For the sake of completeness we define the Halton sequences, the Kronecker sequences, and the digital $(t,s)$-sequences. 

For the Halton sequence \cite{halton} $(\bsy_n)_{n\geq 0}$ we choose $s$ different pairwise coprime bases $b_1,\ldots,b_s\geq 2$ and construct the $i$th component $y_n^{(i)}$ of the $n$th point $\bsy_n=(y_n^{(1)},\ldots,y_n^{(s)})$ by representing $n=n_0^{(i)}+n_1^{(i)}b_i+n_2^{(i)}b_i^2+\cdots$ in base $b_i$ and set $$y_n^{(i)}=n_0^{(i)}/b_i+n_1^{(i)}/b_i^2+n_2^{(i)}/b_i^3+\cdots.$$

The $s$-dimensional Kronecker sequence related to the real numbers $\alpha_1,\ldots,\alpha_s$ is defined by  $\left(\bsx_n=(\{n\alpha_1\},\,\dots,\,\{n\alpha_s\})\right)_{n\geq 0}$, where $\{\cdot\}$ denotes the fractional part operation.  It is well-known to be uniformly distributed if and only if $1,\,\alpha_1,\,\dots,\,\alpha_s$ are linearly independent over $\mathbb{Q}$. 

For the digital $(t,s)$-sequences in the sense of Niederreiter \cite{nie92} we start with the more general, digital $(\bsT,s)$-sequences in the sense of Larcher and Niederreiter, see \cite{ln95}. 

\begin{defi}
Choose $s$, $\NN\times\NN_0$-matrices $C^{(1)},\,\dots,\,C^{(s)}$ over $\FF_p$, $p$ prime. To generate the $i$th coordinate $x_n^{(i)}$ of $\bsx_n$, represent the integer $n$ in base $p$
$$ n = n_0+n_1p+\dots+n_rp^r, $$
set
$$ \vec n :=(n_0,\,\dots,\,n_r,\,0,\,0,\,\dots)^T $$
and 
$$ C^{(i)}\cdot\vec n =:(y_1^{(i)},\,y_2^{(i)},\,\dots)^T. $$
Further
$$ x_n^{(i)}:=\frac {y_1^{(i)}}p+\frac {y_2^{(i)}}{p^2}+\dots\,. $$
Then $(\bsx_n)_{n\ge 0}$ is called a\emph{ digital $(\bsT,s)$-sequence over $\FF_p$}, where
the parameter $\bsT$ is defined as follows. 
For every $m\in\NN$ let $\bsT(m)$, satisfying $0\le \bsT(m)\le m$,
be such that for all $d_1,d_2,\ldots,d_s\in\NN_0$ with $d_1+d_2+\cdots
+
d_s =m-\bsT(m)$ the $(m-\bsT(m))\times m$-matrix consisting of the\\

left upper $d_1 \times m$-submatrix of $C_1$ together with the

left upper $d_2 \times m$-submatrix of $C_2$ together with the

\ \vdots

left upper $d_s \times m$-submatrix of $C_s$\\

\noindent has rank $m-\bsT(m)$.
 If $\bsT(m) \le t$ for all $m$, then we speak
of a digital $(t,s)$-sequence over $\FF_p$.
\end{defi}
Note that in the definition above as well as in the following we do not distinguish between the elements of $\FF_p$ and the elements in the set $\{0,1,\ldots,p-1\}$. 
It is well known that a digital $(\bsT,s)$-sequence is uniformly distributed if $\lim_{m\to\infty}(m-\bsT(m))=\infty$. A necessary and sufficient condition for the uniform distribution is that the rows of the generating matrices $C^{(1)},\dots,C^{(s)}$ altogether are linearly independent over $\FF_p$, i.e., that any finite set of rows of $C^{(1)},\dots,C^{(s)}$ is linearly independent over $\FF_p$. 

There are many known examples of digital $(t,s)$-sequences, see for instance \cite{faure,Hofer,HoNie,nie,NieXing,NieYeo,sobol}. 
For the sake of completeness we give the definition of $(t,m,s)$-nets and $(t,s)$-sequences which was introduced by Niederreiter \cite{N87}. 

\begin{defi}For a given dimension $s$, an integer base $p\geq2$, a positive integer $m$ and an integer $t$ with $0\leq t \leq m$, a finite sequence of $p^m$ points in $[0,1)^s$ is called a \emph{$(t,m,s)$-net in base $p$} if each subinterval of the form $I=\prod_{i=1}^s[a_i/p^{d_i},(a_i+1)/p^{d_i})$, where $a_i,\,d_i$ are nonnegative integers satisfying $a_i<p^{d_i}$ for all $1\leq i\leq s$ and $d_1+\cdots + d_s=m-t$, contains exactly $p^t$ points. An infinite sequence $(\bsx_n)_{n\geq 0}\in[0,1)^s$ is called a \emph{$(t,s)$-sequence in base $p$} if for all integers $m,\,k$, satisfying $m> t$ and $k\geq 0$, the point set consisting of $\bsx_{kp^m},\bsx_{kp^m+1},\ldots, \bsx_{(k+1)p^m-1}$ forms a $(t,m,s)$-net in base $p$. 
\end{defi}

The star discrepancy of a $(t,m,1)$-net in base $p$ satisfies $ND_N^*\leq p^t$ (see e.g. \cite[Theorems~4.5 and 4.6]{niesiam}). 
For more information on $(t,s)$-sequences we refer the interested reader to \cite{DP10,niesiam}.

In the following we write $\FF_p[X]$ for the ring of polynomials over $\FF_p$, $\FF_p(X)$ for the field of rational functions over $\FF_p$, and $\FF_p((X^{-1}))$ for the field of formal Laurent series over $\FF_p$. 

Let $L\in\FF_p((X^{-1}))$ then there exist two expansions of $L$. 
The first is its Laurent series 
$$L=\sum_{i=w}^\infty a_iX^{-i}$$
with $a_i\in\FF_p$, $w\in\ZZ$ such that $a_w\neq 0$. Then $\nu(L)=-w$. 
We define the fractional part $\{L\}$ of $L$ by 
$$\{L\}=\sum_{i=\max(1,w)}^\infty a_iX^{-i}$$
The second is the continued fraction expansion $$L=[A_0;A_1,A_2,\ldots]$$
with $A_i\in\FF_{p}[X]$ for $i\geq 0$ and $\deg(A_i)\geq 1$ for $i\geq 1$. The expansion is finite for rational $L$ and infinite else. For $h\geq 0$ the $h$th convergent $P_h/Q_h$ of $L$ is defined by $P_h/Q_h=[A_0;A_1,\ldots,A_h]$, where $P_h,Q_h\in\FF_p[X]$ and $\gcd(P_h,Q_h)=1$. The degree of $Q_h$ is often abbreviated to $d_h$ and satisfies $d_h=\sum_{i=1}^h\deg(A_i)$. Furthermore, $\nu(L-P_h/Q_h)=-d_h-d_{h+1}$ for $k\geq 0$ and for all $k\in\FF_p[X]$ with $0\leq \deg(k)<d_{h+1}$ we have $$\nu(L-b/k)\geq \nu(L-P_h/Q_h) \mbox{ for all $b\in\FF_p[X]$.}$$ (For informations on continued fractions and convergents we refer the interested reader to the Appendix B of \cite{niesiam}.)

We define the \emph{one-dimensional digital Kronecker sequence} $(x_n)_{n\geq 0}$ using $L$. Write $n$ in base $p$, $n=n_0+n_1p+\cdots+n_rp^r$ associate the polynomial $n(X)\in\FF_p[X]$ as $n(X)=n_0+n_1X+\cdots+n_rX^r$. For the $n$th point compute 
$\{n(X)L(X)\}$ and evaluate it by setting $X$ equal $p$. This sequence, often abbreviated to $(\{n(X)L(X)\})_{n\geq 0}$, can be interpreted as digital sequence with generating matrix $C$ given by
$$C=\begin{pmatrix} c_{1,0}&c_{1,1}&c_{1,2}&\ldots \\
c_{2,0}&c_{2,1}&c_{2,2}&\ldots  \\
c_{3,0}&c_{3,1}&c_{3,2}&\ldots \\
\vdots &\vdots&\vdots&\ddots\end{pmatrix}=\begin{pmatrix} a_1&a_2&a_3&\ldots \\
a_2&a_3&a_4&\ldots \\
a_3&a_4&a_5&\ldots \\
\vdots &\ddots&\ddots&\ddots\end{pmatrix}.$$
Straightforward we define the $s$-dimensional Kronecker sequence determined by $L_1,\ldots,L_s$ by just juxtaposing the one-dimensional Kronecker sequences using $L_i$, i.e., $(\{n(X)L_1(X)\},\ldots,\{n(X)L_s(X)\})_{n\geq 0}$. 

Let $\mathcal{H}$ be the set of formal Laurent series $L$ in $\FF_p((X^{-1}))$ with $\nu(L)<0$. Let $h$ be the normalized Haar-measure on $\mathcal{H}$.

There are many analogies between the ordinary Kronecker sequence and the digital Kronecker sequence. 


\begin{itemize}
\item The $s$-dimensional Kronecker sequence associated with $\alpha_1,\ldots,\alpha_s$ is uniformly distributed if and only if $1,\alpha_1,\ldots,\alpha_s$ are linearly independent over $\QQ$. 
\item Let $L_1,\ldots,L_s\in\FF_p((X^{-1}))$. 
The $s$-dimensional Kronecker sequence determined by $L_1,\ldots,L_s$ is uniformly distributed if and only if $1,L_1,\ldots,L_s$ are linearly independent over $\FF_p(X)$ \cite[Theorem~1]{LarNie2}. 

\item The one-dimensional Kronecker sequence associated with $\alpha$ is a low-discrepancy sequence if the continued fraction coefficients of $\alpha$ are bounded. 
\item The one-dimensional digital Kronecker sequence determined by $L$ is a low-discrepancy sequence if the continued fraction coefficients of $L$ have bounded degrees \cite[Theorem~4.48]{niesiam}.
\item The one-dimensional Kronecker sequence associated with $\alpha\in[0,1]$ satisfies for all $\epsilon>0$, $ND_N\ll_{\alpha,\epsilon} \log^{1+\epsilon} N$
for allmost all $\alpha\in[0,1]$ in the sense of Lebesgue measure. 
\item The one-dimensional digital Kronecker sequence determined by $L\in\mathcal{H}$ satisfies for every $\epsilon>0$, $ND_N\ll_{L,p,\epsilon} \log^{1+\epsilon} N$ for almost all $L\in\mathcal{H}$ in the sense of Haar-measure \cite[Corollary~1]{LarNie2}. (See \cite{Larcher2} for the multidimensional version.)
\end{itemize}

There exists also an analog to the Halton sequence in $\FF_p[X]$. 

Let $\FF_p$ be a finite prime field and $b(X)$ be a nonconstant monic polynomial over $\FF_p$ of degree $e$. We define the van der Corput sequence in base $b(X)$ as introduced in \cite{HoferRB,NieYeo}. For the $n$th point $y_n$ regard the base $p$ representation of $n=n_0+n_1p+n_2p^2+\cdots$ and associate the polynomial $n(X)=n_0+n_1X+n_2X^2+\cdots $. Compute the base $b(X)$ representation of $n(X)$, 
$$n(X)=a_0(X)+a_1(X)b(X)+a_2(X)b^2(X)+\cdots$$
with $\deg (a_i(X))<e$, and set $$y_n=\sum_{i=0}^\infty \frac{a_i(p)}{p^{e(i+1)}}.$$

Straightforward we define the $s$-dimensional Halton sequences in bases $(b_1(X),\ldots,b_s(X))$ by just juxtaposing the van der Corput sequences in bases $b_i(X)$. 
\begin{example}
Let $p=2$ and $b(X)=X$. Then the van der Corput sequence in base $X$ over $\FF_2$ is the ordinary van der Corput sequence in base $2$. 
Let $b_1(X)=X$ and $b_2(X)=X+1$ then the Halton sequence in bases $(X,X+1)$ over $\FF_2$ corresponds with the two-dimensional Sobol sequence \cite{sobol} and with the Faure sequence in base $2$ \cite{faure}. 
\end{example}

Again there exist many analogies between the two types of Halton sequences.
\begin{itemize}
\item The Halton sequence in bases $b_1,\ldots,b_s\geq 2$ is uniformly distributed if and only if the bases are pairwise coprime. 
\item The Halton sequence in monic nonconstant bases $b_1(X),\ldots,b_s(X)$ is uniformly distributed if and only if the bases are pairwise coprime. 
\item The Halton sequence in pairwise coprime bases $b_1,\ldots,b_s\geq 2$ is a low-discrepancy sequence. 
\item The Halton sequence in monic nonconstant pairwise coprime bases $b_1(X),\ldots,b_s(X)$ is a low-discrepancy sequence. Indeed it is a $(t,s)$-sequence in base $p$ where $t=\sum_{i=1}^s(e_i-1)$ with $e_i=\deg(b_i(X))$ (see e.g. \cite{HoferRB}). 
\item Let $I$ be an elementary interval of the form 
$$I=\prod_{i=1}^s\left[\frac{a_i}{b_i^{d_i}},\frac{a_i+1}{b_i^{d_i}}\right)$$
with $d_i\geq 0$ and $0\leq a_i<b_i^{d_i}$. Then a point $\bsx_n$ of the Halton sequence in pairwise coprime bases $b_1,\ldots,b_s\geq 2$ is contained in $I$ if and only if 
$$n\equiv R\pmod{\prod_{i=1}^sb_i^{d_i}}$$
where $R$ is determined by the $a_i$. 
\item Regard the Halton sequence in monic nonconstant pairwise coprime bases $b_1(X),\ldots,b_s(X)$ with $\deg(b_i(X))=e_i$. 
Let $I$ be an elementary interval of the form 
$$I=\prod_{i=1}^s\left[\frac{a_i}{p^{e_id_i}},\frac{a_i+1}{p^{e_id_i}}\right)$$
with $d_i\geq 0$ and $0\leq a_i<p^{e_id_i}$. Then a point $\bsx_n$ of the Halton sequence is contained in $I$ if and only if 
$$n(X)\equiv R(X)\pmod{\prod_{i=1}^sb_i(X)^{d_i}}$$
where $R(X)$ is determined by the $a_i$. 
\end{itemize}

\section{Kronecker-Halton sequences}

In the last decade hybrid sequences were actively studied (see for instance \cite{GHN13,HellekalekKritzer,Hofer09a,HoferKritzer,hklp,hoflar,HoferLarcher,HoferPuchhammer,Kritzer,KriPil12,Niederreiter09,Niederreiter10c,Niederreiter11b,NiederreiterWinterhof11}). The idea of building hybrid sequences is to concatenate the components of two or more different types of low-discrepancy sequences or in the original idea of Spanier \cite{Spanier} to combine deterministic sequences with pseudo-random sequences. The intentions are multiple; combining the different structures and/ or advantages of the component sequences, providing new types of sequences; discovering new types of low-discrepancy sequences. The difficulty we face when studying the distribution of hybrid sequences is to work out proper methods which can handle the different structures of the component sequences. Hybrid sequences with one or more digital component sequences turned out to be particularly hard-to-study objects. There are a few results in \cite{Hofer09a,HoferKritzer,hoflar,HoferPuchhammer,KriPil12}.  

The $(s+t)$-dimensional hybrid sequences made of Kronecker sequences related to $\alpha_1,\ldots,\alpha_s$ and Halton sequences in bases $b_1,\ldots,b_t$ are well studied objects \cite{HoferLarcher,Larcher,DrmotaHoferLarcher}. It is known that ... 
\begin{enumerate}
\item ... the hybrid sequence is uniformly distributed if and only if the component sequences are uniformly distributed. (This follows from a more general result in \cite{HoferKritzer}.) 
\item ... the hybrid sequence satisfies for almost all $(\alpha_1,\ldots,\alpha_s)$ in the sense of Lebesgue measure a discrepancy bound in the style of $$ND_N\ll_{\epsilon,\alpha_1,\ldots,\alpha_s,b_1,\ldots,b_s}\log^{s+t+\epsilon} N$$
(see \cite{Larcher,HoferLarcher}).
\item ... the uniformly distributed hybrid sequence in the case where $s=1$ satisfies a discrepancy bound of the form $$ND_N\ll_{\alpha_1,b_1,\ldots,b_t} N^{1/2}\log^t N$$
if $\alpha_1$ has bounded continued fraction coefficients and this bound is best possible up to the $\log$ term \cite[Theorem~2]{DrmotaHoferLarcher}.  
\item ... the uniformly distributed hybrid sequence in the case where $s=1$ satisfies a discrepancy bound of the form $$ND_N\ll_{\epsilon,\alpha_1,b_1,\ldots,b_t} N^{\epsilon}$$
if $\alpha_1$ is an irrational algebraic \cite[Theorem~1]{DrmotaHoferLarcher}.
\end{enumerate}

In this paper we built hybrid sequences whose component sequences stem from the analogs of Kronecker sequences and Halton sequences and provide new results on hybrid sequences built of digital component sequences. We prove an analog of item 1 in Theorem~\ref{thm:1}.

\begin{theorem}\label{thm:1}
Let $p\in\PP$ and $s,t\in\NN$. Let $b_1(X),\ldots,b_t(X)\in\FF_p[X]$ be monic pairwise coprime nonconstant polynomials. Furthermore, let $L_1,\ldots,L_s\in\FF_p((X^{-1}))$ be such that they are together with $1$ linearly independent over $\FF_p(X)$. Let $(\bsx_n)_{n\geq 0}$ be the digital Kronecker sequence related to $L_1,\ldots,L_s$ and $(\bsy_n)_{n\geq 0}$ the digital Halton sequence in bases $b_1(X),\ldots,b_t(X)$. 
Then the hybrid sequence $(\bsx_n,\bsy_n)_{n\geq 0}$ is uniformly distributed. 
\end{theorem}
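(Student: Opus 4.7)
The strategy is to reduce uniform distribution on elementary boxes to the Larcher-Niederreiter uniform distribution criterion for digital Kronecker sequences, via the explicit Halton characterization from the final bullet of Section~1.

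I would first reduce to showing $A_N(E)/N\to\lambda(E)$ along $N=p^M$ for every elementary box $E=E'\times E''$ of the form
\[
E'=\prod_{i=1}^s\bigl[\tfrac{a_i}{p^{f_i}},\tfrac{a_i+1}{p^{f_i}}\bigr),\qquad E''=\prod_{l=1}^t\bigl[\tfrac{\tilde a_l}{p^{e_ld_l}},\tfrac{\tilde a_l+1}{p^{e_ld_l}}\bigr),\quad e_l:=\deg b_l(X).
\]
Any axis-parallel box is sandwiched between finite unions of such boxes, and passing from $N=p^M$ to general $N$ is routine. By the final bullet of Section~1 applied to the $t$-dimensional Halton part, $\bsy_n\in E''$ is equivalent to the single polynomial congruence $n(X)\equiv R_0(X)\pmod{B(X)}$, with $B(X):=\prod_{l=1}^t b_l(X)^{d_l}$ and $R_0(X)\in\FF_p[X]$ depending only on the $\tilde a_l$. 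The admissible indices are thus exactly $n(X)=R_0(X)+B(X)k(X)$, with $k(X)\in\FF_p[X]$.

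For any such $n$ and each $i$ I would use the identity
\[
\{n(X)L_i(X)\}=\{R_0(X)L_i(X)\}+\{B(X)k(X)L_i(X)\}\qquad\text{in }\FF_p((X^{-1})),
\]
which holds because both summands already have negative valuation, so no ``integer part'' correction is needed. Hence prescribing the first $f_i$ base-$p$ digits of $\bsx_n^{(i)}$ is equivalent to prescribing the first $f_i$ Laurent coefficients of $\{B(X)k(X)L_i(X)\}$, up to a fixed $\FF_p$-translation by the corresponding digits of $\{R_0(X)L_i(X)\}$. The sub-sequence $\bigl(\{B(X)k(X)L_1(X)\},\ldots,\{B(X)k(X)L_s(X)\}\bigr)_{k\ge 0}$ is precisely the $s$-dimensional digital Kronecker sequence attached to $(BL_1,\ldots,BL_s)$.

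By the Larcher-Niederreiter criterion recalled in Section~1, this sub-Kronecker sequence is uniformly distributed iff $1,BL_1,\ldots,BL_s$ are linearly independent over $\FF_p(X)$; any relation $c_0+\sum_i c_iBL_i=0$ with $c_i\in\FF_p(X)$ becomes, after dividing by the nonzero polynomial $B$, the relation $(c_0/B)+\sum_i c_iL_i=0$, and the hypothesis on $L_1,\ldots,L_s$ then forces every $c_i$ and also $c_0$ to vanish. Counting admissible $n<p^M$ gives exactly $K=p^{M-\deg B}$ values of $k$, so the uniform distribution of the sub-sequence yields $A_{p^M}(E)/p^M\to\lambda(E')\cdot p^{-\deg B}=\lambda(E)$. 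The delicate step is the displayed Laurent identity: it rests on the no-carry property of addition in $\FF_p((X^{-1}))$, and this is exactly what aligns the $\FF_p$-additive structure on Laurent coefficients with the base-$p$ digit structure of the real-valued Kronecker coordinates.
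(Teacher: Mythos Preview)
Your approach is essentially the paper's: reduce to elementary boxes, use the Halton congruence $n(X)\equiv R_0\pmod{B}$ to pass to the sub-Kronecker sequence attached to $BL_1,\ldots,BL_s$, then invoke the Larcher--Niederreiter criterion after checking that $1,BL_1,\ldots,BL_s$ stay linearly independent over $\FF_p(X)$. The additive identity for fractional parts and the resulting digit shift are exactly what the paper uses when it replaces the box $\prod[a_i/p^{d_i},(a_i+1)/p^{d_i})$ by a shifted box $\prod[r_i/p^{d_i},(r_i+1)/p^{d_i})$.

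The one place your sketch is too optimistic is the sentence ``passing from $N=p^M$ to general $N$ is routine.'' For an arbitrary sequence, convergence of $A_{p^M}(E)/p^M$ does \emph{not} imply uniform distribution, since $p^{M+1}/p^M=p\not\to 1$. What saves you here is precisely the content of the paper's Lemma~\ref{lem:2}: the map $n\mapsto k$ (restricted to admissible $n$) is not monotone, but the lemma shows that every base-$p$ block $\{Kp^{u+e},\ldots,(K+1)p^{u+e}-1\}$ of $n$'s maps to a block $\{Cp^u,\ldots,(C+1)p^u-1\}$ of $k$'s. Combined with the fact that the linear-independence criterion gives a genuine \emph{net} property on such $k$-blocks (not merely asymptotic uniform distribution), one gets the exact fair-share count on every block of $n$'s, and the standard base-$p$ decomposition of $\{0,\ldots,N-1\}$ then handles general $N$. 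So your plan is correct, but the ``routine'' step hides the only nontrivial combinatorial lemma in the paper's proof; you should either prove that block-to-block correspondence, or explicitly invoke that the hybrid sequence is itself a digital sequence over $\FF_p$ and that for digital sequences convergence along $N=p^M$ on elementary boxes is equivalent to the row-independence criterion and hence to uniform distribution.
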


Furthermore, we prove an analog to item 3 in Theorem~\ref{thm:2} which by Theorem~\ref{thm:3} is best possible up to the $\log $ terms. Theorem~\ref{thm:3} already indicates that an analog of item 4 does not hold true as the $L$ considered there is an algebraic one. For the proof of item 4 in \cite{DrmotaHoferLarcher} an essential tool is Ridout's $p$-adic version of the Thue-Siegel-Roth Theorem \cite{ridout}. Now it turns out \cite{BugeaudMathan} that the $p$-adic version of the Thue-Siegel-Roth Theorem is far from being true in the field of power series in positive characteristics. 

\begin{theorem}\label{thm:2}
Let $p\in\PP$, $t\in\NN$, $b_1(X),\ldots,b_t(X)$ be monic pairwise coprime nonconstant polynomials over $\FF_p$. Let $(\bsy_n)_{n\geq 0}$ be the Halton sequence in bases $b_1(X),\ldots,b_t(X)$. 
Let $L$ having continued fraction coefficients of bounded degrees. Let $(x_n)_{n\geq 0}$ be the Kronecker sequence associated with $L$. Then the discrepancy of the Kronecker-Halton sequence $(x_n,\bsy_n)_{n\geq 0}$ satisfies 
$$ND_N\ll_{p,L,b_1,\ldots,b_t,t} N^{1/2}\log^{t+1} N.$$ 
\end{theorem}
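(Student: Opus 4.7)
The plan is to adapt the argument of \cite[Theorem~2]{DrmotaHoferLarcher} to the formal Laurent series setting over $\FF_p((X^{-1}))$. First, I reduce the star discrepancy of the hybrid sequence to a sum of counting errors over boxes of the mixed form
$$J_{\vec a,\vec d,\beta} \;:=\; [0,\beta)\times\prod_{i=1}^t\left[\frac{a_i}{p^{e_id_i}},\frac{a_i+1}{p^{e_id_i}}\right).$$
Each Halton coordinate $[0,\gamma_i)$ decomposes, via its base-$p^{e_i}$ expansion, into $O(\log_pN)$ elementary intervals of the required form, so this reduction costs a factor $\ll\log^tN$ in the final bound.

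By the last bullet of Section~1, a Halton box of that form corresponds to the polynomial congruence $n(X)\equiv R(X)\pmod{B(X)}$, where $B(X):=\prod_ib_i(X)^{d_i}$ has degree $E:=\sum_ie_id_i$ and $R(X)$ is determined by the $a_i$. Detecting this congruence by additive characters $\psi_H$ of the quotient $\FF_p[X]/(B(X))$ indexed by $H\in\FF_p[X]/(B)$, the discrepancy per mixed box becomes
$$\Delta \;=\; \frac{1}{p^E}\sum_{H\ne 0\bmod B}\psi_H(-R)\,S_H,\qquad S_H:=\sum_{n<N}\psi_H(n(X))\,\mathbf{1}_{[0,\beta)}(x_n).$$

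The heart of the proof is to bound $|S_H|\ll N^{1/2}\log N$ uniformly in $H\ne 0$. Fourier-expanding $\mathbf{1}_{[0,\beta)}$ in terms of classical characters $e(k\,\cdot)$ on $[0,1)$ with Fourier coefficients of order $1/|k|$, and exchanging summations, reduces this to bounding the two-dimensional hybrid exponential sums
$$T_{H,k}\;:=\;\sum_{n<N}\psi_H(n(X))\,e(k\,x_n)\qquad(k\ne 0)$$
by $N^{1/2}\,\mathrm{polylog}(N)$. The factor $\psi_H(n(X))=\chi(c_1(H(X)n(X)/B(X)))$ corresponds to the rational Laurent series $H/B\in\FF_p(X)$, while $e(k\,x_n)$ encodes the digital Kronecker character associated to the irrational $L$. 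Viewing $T_{H,k}$ as a Weyl sum for a two-dimensional hybrid sequence built from $H/B$ and $L$, and applying an Erd\H{o}s--Tur\'an--Koksma-type inequality together with the continued-fraction estimates for $L$ (whose partial quotients have bounded degrees) and the best-approximation estimates for $H/B$ in $\FF_p((X^{-1}))$, yields the desired $N^{1/2}$ bound. Summing over $k$ gives $|S_H|\ll N^{1/2}\log N$; then summing over the $p^E-1$ non-trivial $H$ and dividing by $p^E$ yields $|\Delta|\ll N^{1/2}\log N$ per box.

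Finally, summing these per-box errors over the $\ll\log^tN$ mixed boxes produced in the first step produces the claimed $ND_N\ll N^{1/2}\log^{t+1}N$. The principal obstacle is the Weyl bound on $T_{H,k}$: it must be obtained uniformly across all non-trivial $H\bmod B$, all moduli $B$ of degree up to $O(\log N)$, and all nonzero integer frequencies $k$, through a careful analysis of how the convergents of $L$ interact with rational approximations to $H/B$. Moreover it must produce exactly an $N^{1/2}$ factor, since a stronger log-only bound would contradict the lower bound provided by Theorem~\ref{thm:3}, while a weaker bound would inflate the final discrepancy estimate beyond the stated order.
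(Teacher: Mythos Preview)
Your proposal diverges substantially from the paper's argument and, as written, contains a genuine gap at precisely the point you flag as the ``principal obstacle.''

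The paper does not use character sums at all. Instead, it rests on a structural observation (Proposition~\ref{prop:1}): if $L$ has continued fraction coefficients of degree at most $K(L)$ and $B\in\FF_p[X]$ has degree $e$, then the digital Kronecker sequence built from $BL$ is a $(K(L)+e-1,1)$-sequence over $\FF_p$. Combined with Lemma~\ref{lem:2}, this means that the subsequence of $(x_n)$ indexed by $n(X)\equiv R(X)\pmod{B(X)}$, taken over a full block of $p^{u+e}$ consecutive $n$, is exactly a $(K(L)+e-1,u,1)$-net (up to a fixed digital shift). Hence the per-box error $|A_N(I)-N\lambda(I)|$ is bounded by $p^{K(L)+e-1}$ times roughly $\log_pN$ many net contributions. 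Splitting the Halton boxes at the threshold $e=\sum_je_jl_j\le\tfrac12\log_pN$ versus $e>\tfrac12\log_pN$ then gives $\ll\sqrt{N}\log N$ in the first regime (since $p^e\le\sqrt{N}$) and a trivial bound $\le\sqrt{N}+1$ in the second. Summing over the $\ll\log^tN$ boxes yields the theorem. The $\sqrt N$ thus arises from balancing the net-quality degradation $p^e$ against the trivial counting bound $N/p^e$, not from any exponential-sum cancellation.

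Your route, by contrast, asks for a uniform bound $|T_{H,k}|\ll N^{1/2}\mathrm{polylog}(N)$ on a hybrid sum mixing an additive character of $\FF_p[X]/(B)$ with a \emph{classical} archimedean exponential $e(k\,x_n)$. Several problems arise. First, $x_n$ is produced digitally, so classical Fourier expansion of $\mathbf{1}_{[0,\beta)}$ interacts poorly with the generating-matrix structure; the natural characters here are Walsh-type, not $e(k\cdot)$. Second, the Erd\H{o}s--Tur\'an--Koksma inequality bounds discrepancy \emph{by} exponential sums, not conversely; invoking it to estimate $T_{H,k}$ is circular. Third, you give no mechanism that would produce exactly a square-root saving uniformly in $H$, $B$, and $k$; the vague reference to ``continued-fraction estimates for $L$ and best-approximation estimates for $H/B$'' does not constitute a proof, and it is unclear how the rational $H/B$ and the irrational $L$ interact to give the claimed cancellation. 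Finally, your scheme never exploits the threshold $e\approx\tfrac12\log_pN$, which in the paper is where the $\sqrt N$ actually comes from. As stated, the proposal is a sketch whose decisive step is left open, whereas the paper's net-theoretic argument is short and complete.
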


\begin{theorem}\label{thm:3}
We regard the two-dimensional sequence $(x_n,y_n)_{n\geq 0}$, where $(x_n)_{n\geq 0}$ is the digital Kronecker sequence associated with $$L=[0;X,X^2,X,X^2,X,X^2,\ldots]\in\FF_2((X^{-1}))$$ and $(y_n)_{n\geq 0}$ is the Halton sequence in base $X$ over $\FF_2$ (i.e., the van der Corput sequence in base $2$). Then 
$$ND_N\gg \sqrt{N}$$
\end{theorem}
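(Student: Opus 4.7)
The plan is to exploit the explicit algebraic structure of $L$ to exhibit, for each $\ell\ge 0$, an axis-parallel dyadic box on which the first $N=2^m$ points of the hybrid sequence are maximally unbalanced. The first task is to compute the Laurent expansion of $L$. Since the continued fraction $[0;\overline{X,X^2}]$ has period~$2$, a short calculation shows that $L$ satisfies the quadratic equation $L^2+X^2L+X=0$ in $\FF_2((X^{-1}))$. Writing $L=\sum_{i\ge 1}a_iX^{-i}$ with $a_i\in\FF_2$ and invoking the Frobenius identity $L^2=\sum_{i\ge 1}a_iX^{-2i}$, a coefficient-by-coefficient comparison in $L^2=X+X^2L$ yields $a_1=1$, $a_2=0$, $a_{2j+3}=0$ for $j\ge 0$, and the recursion $a_{2j+2}=a_j$ for $j\ge 1$. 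A straightforward induction then shows $a_j=1$ if and only if $j\in J:=\{j_\ell:=3\cdot 2^\ell-2:\ell\ge 0\}=\{1,4,10,22,46,\dots\}$; consecutive elements satisfy $j_{\ell+1}=2j_\ell+2$, so gaps in $J$ grow geometrically.

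For each $\ell\ge 0$ I set $m:=2j_\ell+1$, $d_2:=j_\ell$, $d_1:=1$. Then $\{d_2+1,\dots,m\}=\{j_\ell+1,\dots,2j_\ell+1\}$ lies strictly between the consecutive support points $j_\ell$ and $j_{\ell+1}=2j_\ell+2$ of $(a_i)$, so $a_{d_2+1}=\cdots=a_m=0$. From the digital Kronecker construction the first binary digit of $x_n$ equals $\sum_{j\ge 0}a_{j+1}n_j$; for $n<2^m$ I split this sum at $j=d_2$, and the upper part vanishes by the step just taken, leaving $\sum_{j=0}^{d_2-1}a_{j+1}n_j$, which depends only on the low-order bits $n_0,\dots,n_{d_2-1}$. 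Because $(y_n)$ is the ordinary van der Corput sequence in base~$2$, those same digits are exactly what decide the dyadic $2^{-d_2}$-subinterval containing $y_n$. Hence, for every $a_2\in\{0,\dots,2^{d_2}-1\}$, all $2^{m-d_2}$ integers $n<2^m$ with $y_n\in[a_2/2^{d_2},(a_2+1)/2^{d_2})$ produce the same first binary digit of $x_n$, and therefore all lie in the same half of the first coordinate.

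Since $a_1=1$, that common first digit is a non-trivial $\FF_2$-linear function of the low-order bits of $n$, so there is at least one $a_2$ for which the value is $0$. For such $a_2$ the box $B:=[1/2,1)\times[a_2/2^{d_2},(a_2+1)/2^{d_2})$ contains no point of the first $N=2^m$ terms, while $\lambda(B)=2^{-d_2-1}$. Therefore
$$ND_N\ge N\lambda(B)=2^{m-d_2-1}=2^{j_\ell}=\sqrt{N/2},$$
and letting $\ell\to\infty$ yields $ND_N\gg\sqrt{N}$ along the subsequence $N=2^{2j_\ell+1}$. The main obstacle is the first step: establishing that the Laurent expansion of $L$ is supported on the geometrically spaced set $J$. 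Once this sparsity is known, the remainder is a short digital-net unbalancing argument typical of such lower bounds; I expect the Frobenius identity applied to the quadratic to be cleaner than iterating the continued-fraction recursion for $Q_h,P_h$ directly.
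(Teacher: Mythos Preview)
Your proof is correct and follows essentially the same route as the paper's: both identify the Laurent support of $L$ as $\{j_\ell=3\cdot 2^\ell-2:\ell\ge 0\}$ and use the resulting gap $a_{j_\ell+1}=\cdots=a_{2j_\ell+1}=0$ to show that for $N=2^{2j_\ell+1}$ the box $[1/2,1)\times[0,2^{-j_\ell})$ contains no point of the first $N$ terms, yielding $ND_N\ge 2^{j_\ell}=\sqrt{N/2}$. The only cosmetic differences are that you derive the expansion via the Frobenius identity applied to $L^2+X^2L+X=0$ (the paper simply quotes it from Example~2), and you phrase the choice of $a_2$ generically whereas the paper takes $a_2=0$ directly.
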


Finally, we prove a metrical result in the sense of the second item. 

\begin{theorem}\label{thm:4}
Let $p\in\PP$, $t \in\NN$, $b_1(X),\ldots,b_t(X)$ be monic pairwise coprime nonconstant polynomials over $\FF_p$. Let $(\bsy_n)_{n\geq 0}$ be the Halton sequence in bases $b_1(X),\ldots,b_t(X)$. 
Let $L\in\mathcal{H}$ and let $(x_n)_{n\geq 0}$ be the Kronecker sequence associated with $L$. Then the star-discrepancy of the Kronecker-Halton sequence $(x_n,\bsy_n)_{n\geq 0}$ satisfies for all $\epsilon>0$,
$$ND_N\ll_{p,L,b_1,\ldots,b_t,\epsilon} \log^{t+1+\epsilon} N$$
for almost all $L\in\mathcal{H}$ in the sense of Haar-measure.  
\end{theorem}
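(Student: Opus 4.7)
The plan is to view the hybrid sequence $(x_n,\bsy_n)_{n\geq 0}$ as a $(t+1)$-dimensional digital sequence over $\FF_p$, whose generating matrices are the (random) Hankel-type matrix $C^{(0)} = C(L)$ built from the Laurent coefficients of $L$, together with the $t$ deterministic Halton matrices $C^{(1)},\dots,C^{(t)}$ associated with $b_1(X),\dots,b_t(X)$. Once this is in place, the star-discrepancy can be controlled by the standard Niederreiter-type bound for digital sequences, namely a sum of the form
$$ND_N^* \;\ll\; \sum_{m=0}^{M}\;\sum_{\substack{(d_0,\dots,d_t)\\ d_0+\cdots+d_t=m}}\;\sum_{(\bsc_0,\dots,\bsc_t)} \frac{1}{r(\bsc_0)\cdots r(\bsc_t)}\,\mathbf{1}\{\text{dependency}\},$$
with $M\asymp \log_p N$ and the indicator picking up those coefficient vectors for which the corresponding linear combination of rows of the matrices vanishes. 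This is the hybrid analog of the sums treated in \cite{LarNie2,Larcher2}.

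Next, I would isolate the Halton contribution. For each fixed $(d_1,\dots,d_t)$ and each choice of coefficient vectors $\bsc_1,\dots,\bsc_t$ acting on $C^{(1)},\dots,C^{(t)}$, the explicit description of the Halton matrices recalled in the last two bullet points of Section~1 (residues modulo $\prod b_i(X)^{d_i}$) gives that the linear combinations of Halton rows of the given shapes range over an explicit space of polynomial characters whose cardinality is tame, and whose contribution to the sum, after summation over $\bsc_1,\dots,\bsc_t$, produces the usual $\log^{t} N$ factor that appears in the Halton discrepancy estimate.

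What remains is a metric estimate on the Kronecker part. For every non-trivial Halton linear combination $\chi$ as above, one has to count the $\bsc_0\in\FF_p^{d_0}$ for which $\bsc_0^{T}\cdot(\text{first }d_0\text{ rows of }C(L))$ equals a prescribed vector determined by $\chi$. Since the entries of $C(L)$ are the Laurent coefficients $a_1,a_2,\ldots$ of $L$ and the Haar measure on $\mathcal H$ pushes these coefficients to i.i.d. uniform variables on $\FF_p$, this is precisely the setup of the Larcher--Niederreiter metric argument: for each individual $\chi$, the set of bad $L$ has Haar-measure $p^{-d_0}$, and the number of $\chi$'s with a given $d_0$ is polynomially controlled by $M$. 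Summing over all $(d_0,\dots,d_t)$ with $d_0+\cdots+d_t\leq M$, a Borel--Cantelli argument (applied along $M=M_k$ with $M_k\asymp (1+\eta)^k$ and interpolating in $N$) yields that for almost every $L\in\mathcal H$ the entire sum above is $\ll \log^{t+1+\epsilon}N$. This gives the claimed bound.

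The main obstacle is the last step: one has to make sure that the deterministic Halton linear combinations $\chi$ never "conspire" with the random Kronecker row to produce many more resonances than expected. This requires a careful uniform bound on the number of non-trivial $\chi$ of a given total weight together with the observation, forced by the pairwise coprimality of the $b_i(X)$, that the $\chi$'s are essentially parametrised by residue classes modulo $\prod b_i(X)^{d_i}$. Combined with the fact that, by the Haar-measure argument, each prescribed target vector is hit by $L$ with probability $p^{-d_0}$, this yields the needed cancellation and upgrades the $\log^{1+\epsilon}N$ bound for the one-dimensional digital Kronecker sequence (fifth bullet of Section~1) to the $\log^{t+1+\epsilon}N$ bound of the theorem.
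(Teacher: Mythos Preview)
Your route is genuinely different from the paper's. The paper does not pass through a Niederreiter figure-of-merit sum over row dependencies of the $(t+1)$ generating matrices. Instead it first establishes a \emph{deterministic} bound (its Proposition~\ref{prop:2}),
\[
ND_N^* \;\ll_{b_i,p,t}\; \log^t N \;+\; \sum_{h=1}^{\lfloor\log_pN\rfloor}\sum_{l_1=1}^{\lfloor\log_pN\rfloor}\!\cdots\!\sum_{l_t=1}^{\lfloor\log_pN\rfloor}\deg\!\bigl(A_h(b_1^{l_1}\cdots b_t^{l_t}L)\bigr)\,p^{\deg(A_h(b_1^{l_1}\cdots b_t^{l_t}L))},
\]
obtained by splitting an anchored box into elementary Halton pieces, using that the Halton condition isolates a residue class $n(X)\equiv R(X)\pmod{B(X)}$ with $B=\prod_j b_j^{l_j}$, and then applying the one-dimensional net bound (Lemmas~\ref{lem:5} and~\ref{lem:6}) to the shifted sequence $(\{k(X)B(X)L+V\})_k$. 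The quality of that sequence is governed by the continued-fraction coefficients of $BL$, not of $L$. The metric step (Proposition~\ref{prop:3}) then controls $\deg A_h(BL)$ simultaneously for all $B=b_1^{l_1}\cdots b_t^{l_t}$, the key device being that $L\mapsto\{BL\}$ is Haar-measure preserving (Lemma~\ref{lem:4}), so the continued-fraction statistics of each $BL$ coincide with those of $L$. This buys a clean separation: the Halton--Kronecker interaction is absorbed entirely into the deterministic Proposition~\ref{prop:2}, and the metric argument is purely a statement about partial quotients, handled by an integration over the sets $E_C$ rather than Borel--Cantelli.

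Your programme, in the spirit of \cite{Larcher2}, is plausible in outline but has a real gap precisely at the point you flag as the main obstacle. The assertion that ``for each individual $\chi$ the set of bad $L$ has Haar-measure $p^{-d_0}$'' is not the right quantity: for a fixed nonzero $\bsc_0\in\FF_p^{d_0}$ and a prescribed target vector of length $m$, the event $\bsc_0^{T}C(L)\!\mid_m=v$ imposes $m$ linearly independent conditions on the Laurent coefficients $a_1,\dots,a_{m+d_0-1}$ and therefore has measure $p^{-m}$, not $p^{-d_0}$. To bound the discrepancy sum you then have to sum over the $\asymp p^{d_0}$ choices of $\bsc_0$ with weights $1/r(\bsc_0)$, over all Halton characters $\chi$ compatible with $(d_1,\dots,d_t)$, and over all $(d_0,\dots,d_t)$ with $d_0+\cdots+d_t\le M$, and only then pass to a set of full measure. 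Showing that this bookkeeping yields exponent $t+1+\epsilon$ rather than something larger is exactly the computation your sketch omits, and the vague appeal to ``coprimality forces the $\chi$'s to be parametrised by residue classes'' does not by itself deliver the required cancellation. The paper's continued-fraction route sidesteps this count altogether.
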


We prove our theorems in the rest of the paper. Theorem~\ref{thm:1} is treated in Section~\ref{sec:3},  Theorem~\ref{thm:2} in Section~\ref{sec:4},   Theorem~\ref{thm:3} in Section~\ref{sec:5}, and finally Theorem~\ref{thm:4} in Section~\ref{sec:6}.

\section{Proof of Theorem~\ref{thm:1}}\label{sec:3}

For the proof of Theorem~\ref{thm:1} we need the following lemma. 

\begin{lemma}\label{lem:2}
Let $e\in\NN_0$, $B(X),R(X)\in\FF_p[X]$ with $\deg(R(X))<\deg(B(X))=e$ and $B(X)$ is monic. Furthermore let $u\in\NN$ and $K\in\NN_0$. Let $n=Kp^{u+e}, Kp^{u+e}+1,\ldots,(K+1)p^{u+e}-1$. We regard all associated polynomials $n(X)$ that satisfy $n(X)\equiv R(X)\pmod{B(X)}$. Then they are of the form 
$$n(X)=k(X)B(X)+R(X)$$
with $k(X)$ out of the set 
$$k(X)=r(X)+X^uC(X)$$
with a fixed $C(X)\in\FF_p(X)$ and $r(X)$ ranges over all polynomials of degree $<u$. 
\end{lemma}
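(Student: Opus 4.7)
The plan is to reduce the claim to a single Euclidean division. First, I parameterize the range of $n$: for $Kp^{u+e} \leq n < (K+1)p^{u+e}$, the base-$p$ digits at positions $0,\ldots,u+e-1$ vary freely while those at positions $\geq u+e$ are the digits of $K$. In polynomial language this reads $n(X) = m(X) + X^{u+e} K(X)$, where $K(X)$ denotes the polynomial associated with the integer $K$ and $m(X)$ ranges over all polynomials in $\FF_p[X]$ of degree $< u+e$.

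Next, I translate the congruence. Divide $X^{u+e}K(X)$ by $B(X)$ to obtain $X^{u+e}K(X) = Q_0(X) B(X) + T_0(X)$ with $\deg T_0 < e$; here $Q_0$ and $T_0$ depend only on $K$, $B$, $u$, $e$. The condition $n(X) \equiv R(X) \pmod{B(X)}$ becomes $m(X) \equiv R(X) - T_0(X) \pmod{B(X)}$. Since $\deg(R - T_0) < e \le u+e$, the set of $m \in \FF_p[X]$ with $\deg m < u+e$ solving this congruence is exactly $\{(R - T_0) + rB : r \in \FF_p[X],\ \deg r < u\}$, giving the expected $p^u$ solutions.

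Substituting back yields $n(X) = R(X) + B(X)\bigl(r(X) + Q_0(X)\bigr)$, hence $k(X) = r(X) + Q_0(X)$. Now split $Q_0(X)$ uniquely as $Q_0(X) = q_0(X) + X^u C(X)$ with $\deg q_0 < u$. Since the map $r \mapsto r + q_0$ is a bijection on the set of polynomials of degree $< u$, the admissible $k(X)$ are precisely $r'(X) + X^u C(X)$ with $r'$ ranging over polynomials of degree $< u$, where $C(X) \in \FF_p[X]$ is a fixed polynomial determined only by $K$, $B$, $u$, $e$. This is the content of the lemma.

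There is no serious obstacle here: the proof is pure bookkeeping on top of the Euclidean algorithm. The only step that needs a moment of care is the degree accounting, namely that $\deg(R - T_0) < e$ so $R - T_0$ can be absorbed into the constraint $\deg m < u+e$ without shrinking the range of $r$, and that the canonical split $Q_0 = q_0 + X^u C$ is well defined. (I also note in passing that the stated "$C(X) \in \FF_p(X)$" in the lemma appears to be a typographical slip for $\FF_p[X]$, as $C$ is manifestly a polynomial here.)
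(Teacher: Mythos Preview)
Your proof is correct and follows essentially the same route as the paper's: split $n(X)=K(X)X^{u+e}+m(X)$, reduce the high part modulo $B$, identify the unique residue forcing $n(X)\equiv R\pmod{B}$, and then use a bijection on polynomials of degree $<u$ to obtain the form $k(X)=r(X)+X^uC(X)$. The only cosmetic difference is that the paper expands $X^{u+e}=X^u\bigl(B(X)-b_{e-1}X^{e-1}-\cdots-b_0\bigr)$ explicitly and carries the lower-order piece along, whereas you invoke Euclidean division $X^{u+e}K(X)=Q_0B+T_0$ directly; your version is slightly cleaner but the underlying argument is the same. Your remark about the typographical slip $\FF_p(X)$ versus $\FF_p[X]$ is also correct.
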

\begin{proof}
We write $B(X)=X^e+b_{e-1}X^{e-1}+\cdots+b_1X+b_0$ and $n(X)=K(X)X^{u+e}+M(X)$ where $M(X)$ ranges over all polynomials of degree $<u+e$. 
Then $$K(X)X^{u+e}=K(X)B(X)X^u+K(X)(-b_{e-1}X^{e-1}-\cdots-b_1X-b_0)X^u$$
and 
$$M(X)=u(X)B(X)+v(X)$$
where $u(X)$ ranges over all polynomials of degree $<u$ and $v(X)$ over all polynomials of degree $<e$. 
Hence 
\begin{align*}
n(X)&=K(X)X^{u+e}+M(X)\\
&=(K(X)X^u+u(X))B(X)+v(X)+K(X)X^u(-b_{e-1}X^{e-1}-\cdots-b_1X-b_0).
\end{align*}
Now there is a unique ${v}(X)$, say $\overline{v}(X)$, such that 
$$\overline{v}(X)+K(X)X^u(-b_{e-1}X^{e-1}-\cdots-b_1X-b_0)\equiv R(X)\pmod{B(X)}.$$
Now those $n(X)$ satisfying $n(X)\equiv R(X)\pmod{B(X)}$ are of the form 
\begin{align*}
n(X)
&=\big(K(X)X^u+u(X)\big)B(X)+\overline{v}(X)+K(X)X^u(-b_{e-1}X^{e-1}-\cdots-b_1X-b_0)\\
&=\big(K(X)X^u+u(X)\big)B(X)+R(X)+B(X)\big(X^uC_1(X)+C_2(X)\big)\\
&=\Big(\underbrace{\big(K(X)+C_1(X)\big)}_{=:C(X)}X^u+\underbrace{u(X)+C_2(X)}_{=:r(X)}\Big)B(X)+R(X),
\end{align*}
where $C_1(X),C_2(X)$ are fixed polynomials with $\deg(C_2(X))<u$ and $u(X)$ ranges over all polynomials of degree $<u$. Hence $C(X)$ is fixed and $r(X)$ ranges over all polynomials of degree $<u$ and the proof is complete.
\end{proof}

Let $e_j:=\deg(b_j(X))$ for $j=1,\ldots,t$.  
It is sufficient to prove the uniform distribution on elementary intervals of the following form 
$$I=\prod_{i=1}^s\left[\frac{a_i}{p^{d_i}},\frac{a_i+1}{p^{d_i}}\right)\times\prod_{j=1}^t\left[\frac{c_j}{p^{e_jl_j}},\frac{c_j+1}{p^{e_jl_j}}\right) $$
with $l_j,d_i\geq 0$ and $0\leq a_i< p^{d_i}$, $0\leq c_j<p^{e_jl_j}$, as any arbitrary interval can be approximated arbitrarily precise by unions of such elementary intervals. 

Now by the construction of the digital Halton sequence $$\bsy_n\in\prod_{j=1}^t\left[\frac{c_j}{p^{e_jl_j}},\frac{c_j+1}{p^{e_jl_j}}\right)$$
if and only if 
$$n(X)\equiv R(X)\pmod{\underbrace{\prod_{j=1}^tb_j^{l_j}(X)}_{=:B(X)}}$$
where $R(X)$ is uniquely determined by the $c_j$. 
So we have to consider for $\bsx_n$ the subsequence determined by the polynomials 
$n(X)=k(X)B(X)+R(X)$ where $k(X)\in\FF_p(X)$. 
We have 
$$(\{(k(X)B(X)+R(X))L_1\},\cdots,\{(k(X)B(X)+R(X))L_s\})\in \prod_{i=1}^s\left[\frac{a_i}{p^{d_i}},\frac{a_i+1}{p^{d_i}}\right)$$
if and only if 
$$\bsz_k:=(\{k(X)B(X)L_1\},\cdots,\{k(X)B(X)L_s\})\in \prod_{i=1}^s\left[\frac{r_i}{p^{d_i}},\frac{r_i+1}{p^{d_i}}\right)$$
where the $r_i$ are uniquely determined by the $a_i$ and $R(X)$. Let $D^{(1)},\ldots,D^{(s)}$ be the generating matrices associated with $B(X)L_1,\ldots,B(X)L_1$. Now from the fact that $L_1,\ldots,L_s,1$ are linearly independent over $\FF_p(X)$ we know that $B(X)L_1,\ldots,B(X)L_1,1$ are also linearly independent over $\FF_p(X)$. Therefore,, we know that the first $d_1$ rows of $D^{(1)}$ together with the first $d_2$ rows of $D^{(2)}$ together with ... the first $d_s$ rows of $D^{(s)}$ are linearly independent over $\FF_p$. Hence there is a $u\in\NN$ such that when considering any $p^u$ consecutive points where  $k=Up^u,Up^u+1,\ldots(U+1)p^u-1$ then $\prod_{i=1}^s\left[\frac{r_i}{p^{d_i}},\frac{r_i+1}{p^{d_i}}\right)$ contains a fair portion of points, i.e., $p^{u-d_1-\cdots-d_s}$ many. This together with Lemma~\ref{lem:2} implies when considering $(\bsx_n,\bsy_n)$ with $n$ in the range  $n=Kp^{u+\sum_{j=1}^te_jl_j},Kp^{u+\sum_{j=1}^te_jl_j}+1,\ldots,(K+1)p^{u+\sum_{j=1}^te_jl_j}-1$ that exactly $p^{u-d_1-\cdots-d_s}$ points lie in $I$. This yields the uniform distribution on the elementary interval and hence of the hybrid sequence.

\section{Proof of Theorem~\ref{thm:2}}\label{sec:4}

For preparing the proof of Theorem~\ref{thm:2} we state the following proposition, which is interesting on its own. 

\begin{prop} \label{prop:1}
Let $L$ in $\FF_p((X^{-1}))$ such that the degrees of the coefficients $A_d$ in the continued fraction expansion are bounded. 
We define $K(L)=\sup_{d\geq 1}\deg(A_d)$. Furthermore, let $B\in\FF_p[X]$ with $\deg(B)=e$. Then the digital Kronecker sequence associated with $BL$ is a $(t,1)$-sequence over $\FF_p$ with $t=K(L)+e-1$. 
\end{prop}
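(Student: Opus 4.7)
The plan is to verify the $(t,1)$-sequence condition directly from the definition, by studying the Hankel generating matrix $C$ of the digital Kronecker sequence associated with $BL$, whose entries are $c_{i,j}=a'_{i+j-1}$ where $\{BL\}=\sum_{i\ge 1}a'_iX^{-i}$. I will first rewrite linear independence of the top rows of $C$ as a Diophantine statement. For $P(X)=\sum_{i=1}^k c_iX^{i-1}\in\FF_p[X]$, a direct re-indexing shows that $\sum_{i=1}^k c_i\cdot(\text{row }i\text{ of }C)$ vanishes in the first $m$ columns if and only if the Laurent coefficients of $X^{-1},\ldots,X^{-m}$ in $P(X)\cdot BL$ all vanish, i.e.\ there exists $Q\in\FF_p[X]$ with $\nu(PBL-Q)\le -(m+1)$. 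Since $L$ has bounded partial quotient degrees, its continued fraction is infinite, $L$ is irrational, and $\{PBL\}=0$ cannot occur for nonzero $P$. Consequently the top $m-t$ rows of $C$ are linearly independent in the first $m$ columns if and only if
$$ \nu(\{PBL\})\ge -m \qquad\text{for every nonzero }P\in\FF_p[X]\text{ with }\deg P\le m-t-1. $$

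The key idea is to absorb the factor $B$ into the polynomial multiplier and then invoke the best-approximation property of the continued fraction of $L$ itself, not of $BL$. Setting $\widetilde P:=PB\in\FF_p[X]$, so that $PBL=\widetilde PL$ and $\deg\widetilde P=\deg P+e$, I will choose $h\ge 0$ with $d_h\le\deg\widetilde P<d_{h+1}$ (using the convention $d_0=0$; such $h$ exists because $d_h\to\infty$). The approximation inequality recalled in the preliminaries then yields
$$ \nu(\widetilde PL-Q)\;\ge\;\deg(\widetilde P)-d_h-d_{h+1} \qquad\text{for every }Q\in\FF_p[X]. $$
Combining the bound $d_{h+1}-d_h=\deg A_{h+1}\le K(L)$ with $d_h\le\deg\widetilde P$ gives $d_h+d_{h+1}\le 2\deg(\widetilde P)+K(L)$, and hence $\nu(\widetilde PL-Q)\ge -\deg(\widetilde P)-K(L)$. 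Thus, as soon as $\deg\widetilde P\le m-K(L)$, one has $\nu(\widetilde PL-Q)\ge -m$ for every $Q\in\FF_p[X]$.

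To conclude, the hypothesis $\deg P\le m-t-1=m-K(L)-e$ translates under $\widetilde P=PB$ into $\deg\widetilde P\le m-K(L)$, which is precisely the regime just handled. This will establish the required linear independence of the top $m-t$ rows of $C$ in the first $m$ columns for every $m\ge t+1$, and hence the $(t,1)$-sequence property with $t=K(L)+e-1$. The anticipated main subtlety is conceptual rather than computational: one would naively try to bound the partial quotients of $BL$ in terms of those of $L$ and of $B$, but this is awkward, especially in positive characteristic; the trick of pulling $B$ out of the fractional part and into the polynomial multiplier sidesteps this entirely and reduces the problem cleanly to the given bound $K(L)$ on the partial quotients of $L$.
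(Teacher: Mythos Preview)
Your proposal is correct and follows essentially the same approach as the paper: translate the rank condition on the Hankel generating matrix into a valuation inequality $\nu(PBL-Q)\ge -m$, absorb the factor $B$ into the polynomial multiplier $\widetilde P=PB$, and invoke the best-approximation property of the continued fraction of $L$ (not of $BL$) to bound $\nu(\widetilde P L - Q)$ from below. The paper phrases this as a proof by contradiction and organizes the final inequality slightly differently, but the key idea and the computation are identical; one minor quibble is that ``bounded partial quotient degrees'' does not by itself imply $L$ is irrational, though both your argument and the paper's implicitly require this.
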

\begin{proof}The proposition is already known in the case where $B(X)=1$ (see \cite{niesiam}). Now let $BL=\sum_{i=w_B}^\infty a_iX^{-i}$.

It suffices to prove that for $m>t$ the vectors 
$$\bsc_j=(c_{j,0},c_{j,1},\ldots,c_{j,m-1})\mbox{ for }1\leq j\leq m-t$$
are linearly independent. 
Suppose there are $h_1,\ldots,h_{m-t}\in\FF_p$ such that 
$$\sum_{j=1}^{m-t}h_j \bsc_j=\bszero \in\FF_p^{m}$$
where not all $h_j$ are zero. 
Then $$\sum_{j=1}^{m-t}h_ja_{i+j}=0 \mbox{ for }0\leq i \leq m-1.$$
With $h(X)=\sum_{j=1}^{m-t}h_jX^{j-1}$ we obtain 
\begin{align*}
hBL&= \left(\sum_{j=1}^{m-t}h_jX^{j-1}\right)\left(\sum_{i=w_B}^\infty a_iX^{-i}\right)\\
&=\sum_{j=1}^{m-t}h_j\sum_{i=w_B}^\infty a_iX^{-i+j-1}\\
&=\sum_{j=1}^{m-t}h_j\sum_{i=w_B-j}^\infty a_{i+j}X^{-i-1}.
\end{align*}
And so the coefficient of $X^{-i-1}$ is zero for ${i=0,1,\ldots,m-1}$. Thus for a suitable $q\in\FF_p[X]$ we have $\nu(hBL-q)<-m$. Since $\deg(h(X))\leq m-t-1$ we have 
$$\deg(h)+\nu(hBL-q)<m-t-1-m=-t-1=-K(L)-e.$$
On the other hand use the denominators $Q_h$ of the convergents $P_h/Q_h$ to $L$ and choose $d\in\NN$ such that $\deg(Q_{d-1})\leq \deg(hB)<\deg(Q_d)$, then
\begin{align*}
\deg(h)+\nu(hBL-q)&=2\deg(hB)-\deg(B)+\nu((L-q/(Bh)))\\
&\geq 2\deg(Q_{d-1})+\nu((L-q/(Bh)))-\deg(B)\\
&\geq 2\deg(Q_{d-1})+\nu((L-P_{d-1}/Q_{d-1}))-\deg(B)\\
&=2\deg(Q_{d-1})-\deg(Q_{d-1})-\deg(Q_d)-\deg(B)\\
&=\deg(Q_{d-1})-\deg(Q_d)-\deg(B)\\
&=-\deg(A_d)-\deg(B)\geq -K(L)-e.
\end{align*}
This is a contradiction. 
\end{proof}

\begin{example}\label{examp:2}
Let $\FF_p$ be $\FF_2$ and $L=[0;X,X^2,X,X^2,X,X^2,\ldots]$. Then $L$ solves 
$L^2+X^2L+X=0$. The digital sequence associated with $L$ is a $(1,1)$-sequence over $\FF_q$. 
The formal Laurent series of $L$ is of the form 
$$L=\sum_{n\geq 1}1/X^{2^{n+1}-2^{n-1}-2}.$$
\end{example}

For the proof of Theorem~\ref{thm:2} we show two basic properties: \\
- Let $$I:=[0,\gamma)\times \prod_{j=1}^t\left[\frac{c_j}{p^{e_jl_j}},\frac{c_j+1}{p^{e_jl_j}}\right)$$
with $0\leq c_j<e_jl_j$, $l_j\geq 0$ such that $\sum_{j=1}^te_jl_j\leq \log_p(N)/2$. Then
\begin{equation}\label{equ:1}|A_N(I)-N\lambda(I)|\ll_{L,p} \sqrt{N}\log N\end{equation}
- Let $$J:=[0,\gamma)\times \prod_{j=1}^t\left[\frac{c_j}{p^{e_jl_j}},\delta_j\right)\subseteq[0,\gamma)\times\prod_{j=1}^t\left[\frac{c_j}{p^{e_jl_j}},\frac{c_j+1}{p^{e_jl_j}}\right)\ $$
with $0\leq c_j<e_jl_j$, $l_j\geq 0$ such that $\sum_{j=1}^te_jl_j>\log_p(N)/2$. Then
\begin{equation}\label{equ:2}\max(A_N(J),N\lambda(J))\leq \sqrt{N}+1\end{equation}
For the second we see that $N\lambda(J)\leq N\frac{1}{p^{\sum_{j=1}^te_jl_j}}\leq  N\frac{1}{p^{\log_p(N)/2}}=N\frac{1}{\sqrt{N}}=\sqrt{N}$. Furthermore, $(x_n,\bsy_n)\in J$ implies $\bsy_n\in\prod_{j=1}^t\left[\frac{c_j}{p^{e_jl_j}},\frac{c_j+1}{p^{e_jl_j}}\right)$ which is equivalent to 
$$n(X)\equiv R(X)\pmod{\prod_{j=1}^tb_j^{l_j}(X)}$$ with a fixed $R(X)$ of degree $<\sum_{j=1}^te_jl_j$. Hence 
$$A_N(J)\leq \left\lceil\frac{N}{p^{\sum_{j=1}^te_jl_j}}\right\rceil\leq \left\lceil\sqrt{N}\right\rceil\leq \sqrt{N}+1.$$

For the first write $K(L)=\sup_{i\geq 1}\deg(A_i)$, $e:=\sum_{j=1}^te_jl_j$, and
\begin{align*}N=&N_{\lfloor \log_pN\rfloor}p^{\lfloor \log_pN\rfloor}+N_{\lfloor \log_pN\rfloor-1}p^{\lfloor \log_pN\rfloor-1}+\cdots+N_{K(L)+2e-1}p^{K(L)+2e-1}+\\
&+\underbrace{N_{K(L)+2e-2}p^{K(L)+2e-2}+\cdots+N_1p+N_0}_{=:M_1}
\end{align*} in base $p$. 
We have for the last $M_1$ points of the sequence $A_{M_1}(I)\leq \left\lceil\frac{p^{K(L)+2e-1}}{p^e}\right\rceil\ll p^{K(L)-1}\sqrt{N}$, and also $M_1\lambda(I)\ll p^{K(L)-1}\sqrt{N}$. 

The first $N_{\lfloor \log_pN\rfloor}p^{\lfloor \log_pN\rfloor}$ points relate to a subsequence of the Kronecker sequence $(x_{n_k})_{k\geq 0}$ that is determined by the indices $n$ such that $(x_n,\bsy_n)\in[0,1)\times \prod_{j=1}^t\left[\frac{c_j}{p^{e_jl_j}},\frac{c_j+1}{p^{e_jl_j}}\right)$. Hence by Lemma \ref{lem:2} and Proposition~\ref{prop:1} this $N_{\lfloor \log_pN\rfloor}p^{\lfloor \log_pN\rfloor}$ points of this subsequence 
form $N_{\lfloor \log_pN\rfloor}$ $$(K(L)+e-1,\lfloor\log_pN\rfloor-e,1)-\mbox{nets in base }p.$$
The next $N_{\lfloor \log_pN\rfloor-1}p^{\lfloor \log_pN\rfloor-1}$ points relate to the subsequence of the Kronecker sequence $(x_{n_k})_{k\geq 0}$ that is determined by the indices $n$ such that $(x_n,\bsy_n)\in[0,1)\times \prod_{j=1}^t\left[\frac{c_j}{p^{e_jl_j}},\frac{c_j+1}{p^{e_jl_j}}\right)$. Hence this $N_{\lfloor \log_pN\rfloor-1}p^{\lfloor \log_pN\rfloor-1}$ points belonging to this subsequence 
form $N_{\lfloor \log_pN\rfloor-1}$ $$(K(L)+e-1,\lfloor\log_pN\rfloor-e-1,1)-\mbox{nets in base }p.$$\\
...\\
And finally, the last $N_{K(L)+2e-1}p^{K(L)+2e-1}$ points relate to the subsequence of the Kronecker sequence $(x_{n_k})_{k\geq 0}$ that is determined by the indices $n$ such that $(x_n,\bsy_n)\in[0,1)\times \prod_{j=1}^t\left[\frac{c_j}{p^{e_jl_j}},\frac{c_j+1}{p^{e_jl_j}}\right)$. Hence this $N_{K(L)+2e-1}p^{K(L)+2e-1}$ points of this subsequence 
form $N_{K(L)+2e-1}$ $$(K(L)+e-1,K(L)+e-1,1)-\mbox{nets in base }p.$$
Each of these $\ll_p \log N$ nets satisfies $ND^*_N\ll p^{K(L)+e-1}$. Hence, 
\begin{align*}
|A_N(I)-N\lambda(I)|&\ll_p p^{K(L)+e-1}\log N\\
&\ll_L\sqrt{N}\log N.
\end{align*}
And the proof of the first item is complete. \\

Now for the proof of the Theorem~\ref{thm:2} we start with an arbitrary subinterval 
$$S=[0,\gamma]\times \prod_{j=1}^t[0,\delta_j).$$
We write $\delta_j$ in base $p^{e_j}$
$$\delta_j=\sum_{i=1}^\infty \frac{\beta_{j,i}}{p^{ie_j}}.$$
Define $n_j:=\max(l_j:l_je_j\leq\log_p(N))$, set $z_{j,0}=0$, $z_{j,l_j}:=\sum_{i=1}^{l_j} \frac{\beta_{j,i}}{p^{ie_j}}$ for $1\leq l_j\leq n_j+1$ and $z_{n_j+2}=\delta_j$. We split the above interval into the disjoint union
$$S= \bigcup_{l_1=1}^{n_1+2}\cdots \bigcup_{l_t=1}^{n_t+2}\underbrace{[0,\gamma)\times \prod_{j=1}^t[z_{j,l_j-1},z_{l_j})}_{=:I(l_1,\ldots,l_t)}.$$
Hence 
$$|A_N(S)-N\lambda(S)|\leq \sum_{l_1=1}^{n_1+2}\cdots \sum_{l_t=1}^{n_t+2}|A_{N}(I(l_1,\ldots,l_t))-N\lambda(I(l_1,\ldots,l_t))|=\Sigma_1+\Sigma_2$$
where $\Sigma_1$ sums over all $(l_1,\ldots,l_t)$ such that $\sum_{j=1}^te_jl_j\leq \log_p(N)/2$ and $\Sigma_2$ over the rest. Note that both sums have at most $\prod_{j=1}^t(n_j+2)\ll_{p,t,b_1,\ldots,b_t}\log^t N$ summands.\\

We regard a summand of $\Sigma_1$: 
Here 
\begin{align*}
I(l_1,\ldots,l_t)&=[0,\gamma)\times \prod_{j=1}^t\left[\sum_{i=1}^{l_j-1} \frac{\beta_{j,i}}{p^{ie_j}},\sum_{i=1}^{l_j} \frac{\beta_{j,i}}{p^{ie_j}}\right)\\
&=\bigcup_{c_1=0}^{\beta_{1,l_1}-1}\cdots \bigcup_{c_t=0}^{\beta_{1,l_t}-1}[0,\gamma)\times \prod_{j=1}^t\left[\sum_{i=1}^{l_j-1} \frac{\beta_{j,i}}{p^{ie_j}}+\frac{c_j}{p^{l_je_j}},\sum_{i=1}^{l_j-1} \frac{\beta_{j,i}}{p^{ie_j}}+\frac{c_j+1}{p^{l_je_j}}\right).
\end{align*}
The latter are intervals in the form of the first item. 
Hence 
\begin{align*}
|A_N(I(l_1,\ldots,l_t))-N\lambda(I(l_1,\ldots,l_t))|&\ll_{L,p} \sum_{c_1=0}^{\beta_{1,l_1}-1}\cdots \sum_{c_t=0}^{\beta_{1,l_t}-1}\sqrt{N}\log N\\
&\ll_{L,p,e_1,\ldots,e_s}\sqrt{N}\log N. 
\end{align*}
Altogether $\Sigma_1\ll_{L,p,t,b_1,\ldots,b_t}\sqrt{N}\log^{t+1} N$. \\

Finally, consider a summand of $\Sigma_2$: 
\begin{align*}
I(l_1,\ldots,l_t)&=[0,\gamma)\times \prod_{j=1}^t\left[z_{j,l_j-1},z_{j,l_j}\right)\\
&=\bigcup_{c_1=0}^{\beta_{1,l_1}}\cdots \bigcup_{c_t=0}^{\beta_{t,l_t}}\underbrace{[0,\gamma)\times \left[\sum_{i=1}^{l_j-1} \frac{\beta_{j,i}}{p^{ie_j}}+\frac{c_j}{p^{l_je_j}},\sum_{i=1}^{l_j-1} \frac{\beta_{j,i}}{p^{ie_j}}+\kappa_{j,l_j,c_j}\right)}_{=:J(c_1,\ldots,c_t)}
\end{align*}
with $\kappa_{j,l_j,c_j}=\delta_j-\sum_{i=1}^{l_j} \frac{\beta_{j,i}}{p^{ie_j}}$ if $l_j=n_j+2$ and $c_j=\beta_{j,l_j}$, $\kappa_{j,l_j,c_j}=\frac{c_j}{p^{l_je_j}}$ if $l_j<n_j+2$ and $c_j=\beta_{j,l_j}$, and $\kappa_{j,l_j,c_j}=\frac{c_j+1}{p^{l_je_j}}$ else. 
Thus by the second item 
\begin{align*}
|A_N(I(l_1,\ldots,l_t))-N\lambda(I(l_1,\ldots,l_t))|&\leq \sum_{c_1=0}^{\beta_{1,l_1}}\cdots \sum_{c_t=0}^{\beta_{t,l_t}}|A_N(J(c_1,\ldots,c_t))-N\lambda(J(c_1,\ldots,c_t))|\\
&\leq \sum_{c_1=0}^{\beta_{1,l_1}}\cdots \sum_{c_t=0}^{\beta_{t,l_t}}\max(A_N(J(c_1,\ldots,c_t)),N\lambda(J(c_1,\ldots,c_t)))\\
&\ll \sum_{c_1=0}^{\beta_{1,l_1}}\cdots \sum_{c_t=0}^{\beta_{t,l_t}}\sqrt{N}\\
&\ll_{p,e_1,\ldots,e_s}\sqrt{N}.
\end{align*}
Altogether 
$$\Sigma_2\ll_{p,t,b_1,\ldots,b_s}\sqrt{N} \log^t N$$
and the proof is complete.

\section{Proof of Theorem~\ref{thm:3}}\label{sec:5}

Let $n\in\NN$ and $N=2^{2^{n+2}-2^{n}-3}$. Then regard the elementary interval of the form 
$$I_n=[1/2,1)\times [0/2^{2^{n+1}-2^{n-1}-2},1/2^{2^{n+1}-2^{n-1}-2})\subseteq[0,1)^2.$$
Now $y_n\in[0/2^{2^{n+1}-2^{n-1}-2},1/2^{2^{n+1}-2^{n-1}-2})$ if and only if $X^{2^{n+1}-2^{n-1}-2}|n(X)$. So we regard the subsequence $(x_{n_k})_{0\leq k<2^{2^{n+1}-2^{n-1}-1}}$ that goes along with the polynomials $X^{2^{n+1}-2^{n-1}-2}k(X)$ where $\deg(k(X))<{2^{n+1}-2^{n-1}-1}$. 
Using the construction of $x_n$ and Example \ref{examp:2} we see 
\begin{align*}
(\underbrace{1,0,0,1,0,\ldots,0,1}_{\mbox{first ${2^{n+1}-2^{n-1}-2}$ coefficients of $L$,}}\underbrace{0,0,\ldots,0,}_{\mbox{next ${2^{n+1}-2^{n-1}-1}$ coefficients of $L$}} 1,0,\ldots)&\times \\
\times (\underbrace{0,\ldots,0}_{\mbox{first ${2^{n+1}-2^{n-1}-2}$ coefficients of $n(X)$,}},\underbrace{k_0,k_1,\ldots,k_{2^{n+1}-2^{n-1}-2}}_{\mbox{first ${2^{n+1}-2^{n-1}-1}$ coefficients of $k(X)$}},0,\ldots )^T=(0).& 
\end{align*}
Thus the interval $I_n$ remains empty. Hence
\begin{align*}
ND_N&\geq |A_N(I_n)-N\lambda(I_n)|=\frac{2^{2^{n+2}-2^{n}-3}}{2^{2^{n+1}-2^{n-1}-1}}\\
&=\gg\sqrt{N}.
\end{align*}

\section{Proof of Theorem~\ref{thm:4}}\label{sec:6}

For the proof of Theorem~\ref{thm:4} we collect several auxiliary results. 

Let $\mathcal{P}$ be the set of polynomials in $\FF_p[X]$ of degree $\geq1$. 
\begin{lemma}\label{lem:3}
Let $B_1,\ldots,B_k\in\mathcal{P}$ and let 
$$R(B_1,\ldots,B_k)=\{L\in\mathcal{H}:A_j(L)=B_j,\,1\leq j\leq k\}.$$
Then 
$h(R(B_1,\ldots,B_k))=p^{-2(\deg(B_1)+\cdots+\deg(B_k))}$. 
\end{lemma}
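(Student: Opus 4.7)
The plan is to identify $R(B_1,\ldots,B_k)$ with an explicit cylinder set in the Laurent-coefficient coordinates on $\mathcal{H}$ and then read off its Haar measure directly. Set
\[
P/Q := [0;B_1,B_2,\ldots,B_k]\in \FF_p(X),
\]
normalized so that $Q$ is monic; by the standard recursion for convergents $\deg(Q) = D := \sum_{j=1}^{k}\deg(B_j)$. The core claim driving everything is that
\[
L\in R(B_1,\ldots,B_k)\quad \Longleftrightarrow \quad \nu\bigl(L - P/Q\bigr) < -2D.
\]
Granting the claim, the lemma is immediate: this valuation inequality is equivalent to $a_i(L) = a_i(P/Q)$ for $1\le i\le 2D$, and under the identification $\mathcal{H}\cong \FF_p^{\NN}$ via $L\leftrightarrow (a_1(L),a_2(L),\ldots)$, that describes a cylinder set of normalized Haar measure $p^{-2D}$.

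The forward direction of the claim uses only facts already in the preliminaries. If $A_j(L) = B_j$ for $1\le j\le k$, then by uniqueness of the continued fraction expansion $P/Q$ coincides (up to units) with the $k$th convergent $P_k/Q_k$ of $L$, and the identity $\nu(L-P_k/Q_k)=-d_k-d_{k+1}$ gives $\nu(L-P/Q)=-D-d_{k+1}$. Since $d_{k+1}=D+\deg(A_{k+1}(L))\ge D+1$, this is at most $-2D-1<-2D$. The measure-zero case where the continued fraction of $L$ terminates exactly at step $k$, so that $L = P/Q$, can be safely ignored.

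For the converse I would invoke the standard converse to the best-approximation property of convergents over $\FF_p((X^{-1}))$: if $P',Q'\in \FF_p[X]$ are coprime with $Q' \neq 0$ and $\nu(L-P'/Q')<-2\deg(Q')$, then $P'/Q'$ must be a convergent of $L$. This is the function-field analog of Legendre's classical theorem and is folklore (see Appendix~B of \cite{niesiam}); it can be derived from the convergent-approximation inequality quoted in the preliminaries by placing $\deg(Q')$ in the increasing sequence of denominator degrees $(d_h)$ and ruling out strict intermediate approximations. Applied to $P/Q$, it forces $P/Q = P_h/Q_h$ for some $h$; the unique continued fraction expansion of the rational function $P/Q$ is $[0;B_1,\ldots,B_k]$, which pins down $h = k$ and $A_j(L) = B_j$ for $1\le j\le k$, so $L\in R(B_1,\ldots,B_k)$.

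The main obstacle is this converse direction: it is the only step not immediately available from the material in the excerpt and requires invoking the function-field Legendre-type theorem, which must either be cited or supplied in a short lemma. Once it is in hand, the cylinder-set description gives $h(R(B_1,\ldots,B_k)) = p^{-2D} = p^{-2(\deg(B_1)+\cdots+\deg(B_k))}$ as required. (An alternative, more inductive route would establish the scaling identity $h(\phi_B(E)) = p^{-2\deg(B)}h(E)$ for the M\"obius map $\phi_B(M)=1/(B+M)$ by a direct coefficient computation, and iterate; this avoids the Legendre-type theorem but replaces it with an explicit Jacobian-style calculation of the same flavor.)
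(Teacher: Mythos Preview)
Your argument is correct. Note, however, that the paper does not actually prove this lemma: it simply cites \cite[Lemma~2]{Nie88b}. What you have written is essentially the standard proof behind that citation, namely identifying $R(B_1,\ldots,B_k)$ with the valuation ball $\{L\in\mathcal{H}:\nu(L-P/Q)<-2D\}$ and reading off the Haar measure as that of a cylinder on the first $2D$ Laurent coefficients.

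The only step you flag as needing outside input, the function-field Legendre theorem, is genuinely short and could be inlined: place $\deg(Q)=D$ in an interval $[d_h,d_{h+1})$, note that $\nu(P/Q-P_h/Q_h)\ge -D-d_h$ while $\nu(L-P_h/Q_h)=-d_h-d_{h+1}<-D-d_h$, and conclude from the ultrametric inequality that $\nu(L-P/Q)=\nu(P/Q-P_h/Q_h)\ge -D-d_h\ge -2D$ unless $D=d_h$ and $P/Q=P_h/Q_h$. Uniqueness of continued fractions in $\FF_p((X^{-1}))$ (there is no ``last partial quotient equal to $1$'' ambiguity, since every $A_j$ has degree $\ge 1$) then forces $h=k$ and $A_j(L)=B_j$ for $1\le j\le k$. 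With that in hand your cylinder-set computation gives $h(R(B_1,\ldots,B_k))=p^{-2D}$ exactly as claimed, so you have supplied a self-contained proof where the paper merely defers to a reference.
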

\begin{proof}
See \cite[Lemma~2]{Nie88b}. 
\end{proof}

Note that 
$$\sum_{B\in\mathcal{P}}p^{-2\deg(B)}=1.$$

\begin{lemma}\label{lem:4}
Let $B\in\FF_p[X]\setminus\{0\}$. Then $f:\mathcal{H}\to\mathcal{H},\,L\mapsto \{BL\}$ is $h$-measure preserving. 
\end{lemma}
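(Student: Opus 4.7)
The plan is to verify measure preservation on a generating family for the Borel $\sigma$-algebra on $\mathcal{H}$, namely the basic cylinder sets prescribing finitely many Laurent coefficients. Since $\mathcal{H}$ is (as a topological group under addition) isomorphic to $\prod_{i\geq 1}\FF_p$, translation invariance of $h$ forces every cylinder
$$A=\{L=\textstyle\sum_{i\geq 1} a_iX^{-i}\in\mathcal{H}:a_k=c_k,\ 1\leq k\leq m\}$$
to have Haar measure $p^{-m}$. If I can show that $h(f^{-1}(A))=p^{-m}$ for every such $A$, then by the monotone class theorem (or Carath\'eodory extension) measure preservation extends to all Borel sets.

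Write $B=\sum_{j=0}^e b_jX^j$ with $b_e\neq 0$. The key computation is to expand, for $L=\sum_{i\geq 1}a_iX^{-i}$,
$$\{BL\}=\sum_{k\geq 1}\Big(\sum_{j=0}^{e} b_j\, a_{k+j}\Big)X^{-k},$$
so the condition $\{BL\}\in A$ is precisely the $\FF_p$-linear system
$$\sum_{j=0}^{e} b_j\, a_{k+j}=c_k,\qquad 1\leq k\leq m,$$
in the variables $a_1,\ldots,a_{m+e}$. Restricting attention to the $m\times m$ submatrix whose columns are indexed by $a_{e+1},\ldots,a_{e+m}$, one checks directly that this submatrix is lower triangular with $b_e\neq 0$ on the diagonal. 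Hence the system has rank $m$, so its solution set in $\FF_p^{m+e}$ is an affine subspace of dimension $e$, of cardinality $p^e$.

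Consequently, $f^{-1}(A)$ is the disjoint union of $p^e$ basic cylinder sets of ``depth'' $m+e$, each of Haar measure $p^{-(m+e)}$. Summing gives $h(f^{-1}(A))=p^e\cdot p^{-(m+e)}=p^{-m}=h(A)$, as required. The only place where anything could go wrong is the linear-algebra step, where one must exhibit the triangular submatrix with nonzero diagonal; this in turn uses nothing more than $b_e\neq 0$, so no further hypothesis on $B$ is needed beyond $B\neq 0$.
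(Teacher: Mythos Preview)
Your argument is correct. The paper does not actually prove this lemma at all; it simply defers to the references \cite{Spr} and \cite{Han}. By contrast, you give a direct, self-contained proof: reduce to the generating $\pi$-system of cylinder sets, write out the coefficient of $X^{-k}$ in $\{BL\}$ as the linear form $\sum_{j=0}^{e} b_j a_{k+j}$, and observe that the $m\times m$ submatrix in the variables $a_{e+1},\dots,a_{e+m}$ is lower triangular with $b_e\neq 0$ on the diagonal, forcing full rank. The counting of the $p^e$ depth-$(m+e)$ cylinders in the preimage is then immediate. This is more elementary and more explicit than what the paper offers, and it makes transparent exactly where the hypothesis $B\neq 0$ enters (namely, $b_e\neq 0$).
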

\begin{proof}
See, e.g., \cite{Spr} or \cite{Han}. 
\end{proof}

\begin{lemma}\label{lem:5}
Let $L(X)=\sum_{k=w}^\infty a_kX^{-k}$ be any formal Laurent series over $\FF_p$. Let $m\in\NN$, $d_h:=\deg(Q_h)$, and $H\in\NN$ such that $d_H\leq m<d_{H+1}$. Then 
the matrix 
$$\begin{pmatrix}
a_1&a_2&\cdots&a_m\\
a_2&a_3&\cdots&a_{m+1}\\
\ddots&\ddots&\ddots&\ddots\\
a_{d_H}&a_{d_H+1}&\ddots&a_{m+d_H-1} 
\end{pmatrix} $$
over $\FF_p$ has full row rank. 
\end{lemma}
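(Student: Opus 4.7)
The plan is to argue by contradiction in the same spirit as the proof of Proposition~\ref{prop:1}, specialized to the case $B=1$. Suppose the $d_H$ rows of the matrix are linearly dependent, so there exist $h_1,\ldots,h_{d_H}\in\FF_p$, not all zero, with $\sum_{j=1}^{d_H}h_j a_{i+j}=0$ for every $i=0,1,\ldots,m-1$. Setting $h(X):=\sum_{j=1}^{d_H}h_j X^{j-1}$, this is a nonzero polynomial of degree at most $d_H-1$, and multiplying out $h(X)L(X)$ as a Laurent series (exactly as in the opening computation of Proposition~\ref{prop:1}) shows that the coefficients of $X^{-i-1}$ in $hL$ vanish for $i=0,1,\ldots,m-1$. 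Consequently, there exists $q\in\FF_p[X]$ with
$$\nu(hL-q)\le -m-1.$$

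Next I would produce a matching lower bound on $\nu(hL-q)$ from the best-approximation property of the convergents of $L$. Choose the unique index $d\ge 1$ with $d_{d-1}\le\deg(h)<d_d$. Since $\deg(h)\le d_H-1<d_H$, this forces $d\le H$. Applying the inequality quoted in the preliminaries with $k=h$ and the $(d-1)$-st convergent gives
$$\nu(L-q/h)\ge\nu(L-P_{d-1}/Q_{d-1})=-d_{d-1}-d_d,$$
and therefore
$$\nu(hL-q)=\deg(h)+\nu(L-q/h)\ge d_{d-1}+(-d_{d-1}-d_d)=-d_d.$$

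Combining the two estimates yields $d_d\ge m+1$. On the other hand, $d\le H$ together with monotonicity of $(d_j)_{j\ge 0}$ gives $d_d\le d_H\le m$, which contradicts $d_d\ge m+1$. This contradiction forces linear independence of the rows.

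I expect the argument to be essentially routine once the bookkeeping is set up, since it is really just Proposition~\ref{prop:1} with $B=1$. The one step that requires care is the calibration of the index $d$: one must choose it so that the best-approximation inequality applies (i.e.\ $\deg(h)<d_d$) while simultaneously ensuring $d\le H$, which is precisely what turns the hypothesis $d_H\le m$ into a contradiction at the end. No other ingredient is needed.
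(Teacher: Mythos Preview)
Your argument is correct and follows essentially the same route as the paper: assume a linear dependence, package it as a nonzero polynomial $h$ of degree $<d_H$, observe that this forces $\nu(hL-q)<-m$ for some $q$, and then contradict this with the best-approximation property of the convergents of $L$. The only cosmetic difference is that the paper invokes the best-approximation property directly in the form $\nu(\{PL\})\ge\nu(\{Q_{H-1}L\})=-d_H$ for all nonzero $P$ of degree $<d_H$, thereby avoiding your intermediate index $d$; your calibration of $d$ recovers exactly this inequality.
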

\begin{proof}
We know $\nu(\{Q_{H-1}L\})=-d_{H}$ and for all $P\in\FF_p[X]\setminus\{0\}$ with $\deg(P)<d_{H}$ we have $\nu(\{PL\})\geq \nu(\{Q_{H-1}L\})=-d_{H}$. Suppose that the rows are linearly dependent, then there exists a $P(X)=\sum_{r=0}^{d_H-1}p_rX^r$ not the zero polynomial such that $\nu(\{PL\})<-m\leq -d_H$, which is a contradiction. 
\end{proof}

\begin{lemma}\label{lem:6}
Let $V(X),L(X)$ be any formal Laurent series over $\FF_p$. Let $m\in\NN$, $d_h:=\deg(Q_h)$, and $H\in\NN$ such that $d_H\leq m<d_{H+1}$. Then  
$$p^mD^*_{p^m}(\{k(x)L(x)+V(x)\})\leq p^{\deg(A_{H+1}(L))}.$$
\end{lemma}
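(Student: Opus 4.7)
The plan is to reduce the bound to the unshifted case $V=0$ via a digital-shift argument, to identify the first $p^m$ points of $(\{k(X)L(X)\})_{0\leq k<p^m}$ as a $(t,m,1)$-net in base $p$ using Lemma \ref{lem:5}, and finally to apply the classical bound $ND^*_N\le p^t$ for a $(t,m,1)$-net (as recorded in the preliminaries).

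For the reduction, recall that addition in $\FF_p((X^{-1}))$ is coefficient-wise modulo $p$, and that evaluating a Laurent series $\sum_{i\ge 1} c_i X^{-i}$ with $c_i\in\{0,\ldots,p-1\}$ at $X=p$ produces the real number whose $p$-adic digits are exactly $c_1,c_2,\ldots$ Hence if $\{k(X)L(X)\}=\sum c_iX^{-i}$ and $V(X)=\sum v_iX^{-i}$, then $\{k(X)L(X)+V(X)\}=\sum (c_i+v_i)X^{-i}$ with reduction in $\FF_p$, and its evaluation at $X=p$ has $p$-adic digits $c_i+v_i\bmod p$ \emph{digit-by-digit with no carry}. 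Therefore the $V$-shift acts on the evaluated points as a bijection of elementary intervals $[a/p^d,(a+1)/p^d)$ among themselves (the target interval $[a'/p^d,(a'+1)/p^d)$ has digits of $a'$ equal to those of $a$ shifted by $-v_i\bmod p$). Consequently the $(t,m,1)$-net property in base $p$ is preserved under $V$-shift, and so is the star-discrepancy bound $p^mD^*_{p^m}\le p^t$ for such nets.

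Next, the generating matrix $C$ of the digital Kronecker sequence associated with $L=\sum a_iX^{-i}$ is the Hankel matrix with $(i,j)$-entry $a_{i+j-1}$ (for $i\ge 1$, $j\ge 1$). Its top-left $d_H\times m$ block is precisely the matrix appearing in Lemma \ref{lem:5}, which has full row rank $d_H$. By the standard duality for digital nets (any $d\le d_H$ initial rows, restricted to the first $m$ columns, remain linearly independent), the first $p^m$ points of $(\{k(X)L(X)\})_{k\ge 0}$ form an $(m-d_H,m,1)$-net in base $p$.

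Combining the two steps, $p^mD^*_{p^m}(\{k(X)L(X)+V(X)\})\le p^{m-d_H}$. Since $d_H\le m<d_{H+1}=d_H+\deg(A_{H+1}(L))$, we have $m-d_H\le \deg(A_{H+1}(L))-1<\deg(A_{H+1}(L))$, which yields the stated bound. The only real subtlety is the shift reduction: one must be careful that $\FF_p$-addition of Laurent-series coefficients genuinely matches the permutation of $p$-adic digits of the evaluated points, and this is exactly what the no-carry observation above provides — since both $c_i$ and $v_i$ already lie in $\{0,\ldots,p-1\}$, their sum mod $p$ is the $p$-adic digit of the real sum with the appropriate base-$p$ carry set to zero throughout the relevant prefix.
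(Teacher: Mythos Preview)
Your proof is correct and follows essentially the same route as the paper: identify the point set as an $(m-d_H,m,1)$-net via Lemma~\ref{lem:5}, invoke the bound $p^mD^*_{p^m}\le p^{m-d_H}$ for such nets, and use $m<d_{H+1}$ to conclude $m-d_H\le\deg(A_{H+1}(L))$. The only difference is that you spell out the digital-shift invariance (the $V$-translation permutes elementary $p$-adic intervals and hence preserves the net parameter), which the paper leaves implicit.
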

\begin{proof}
By the last lemma this point set is a $(m-d_H,m,1)$-net. Hence 
$$p^mD^*_{p^m}\leq p^{m-d_H}\leq p^{d_{H+1}-d_H}=p^{\deg(A_{H+1}(L))}.$$
\end{proof}

One of the core results in the proof of Theorem~\ref{thm:4} is the following proposition. 
\begin{prop}\label{prop:2}
Let $(x_n)_{n\geq 0}$ be the Kronecker sequence determined by $L\in\mathcal{H}$, let $(\bsy_n)_{n\geq 0}$ be the Halton sequence in bases $b_1(X),\ldots,b_t(X)$ pairwise coprime nonconstant and monic. Then for all $N>1$
$$ND_N((x_n,\bsy_n))\ll_{b_1,\ldots,b_t,p,t} \log^t N +\sum_{h=1}^{\lfloor\log_pN\rfloor}\sum_{l_1=1}^{\lfloor\log_pN\rfloor}\cdots \sum_{l_t=1}^{\lfloor\log_pN\rfloor}\deg(A_h(b_1^{l_1}\cdots b_t^{l_t}L))p^{\deg(A_h(b_1^{l_1}\cdots b_t^{l_t}L))}$$
\end{prop}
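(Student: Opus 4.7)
The plan is to follow the scheme of the proof of Theorem~\ref{thm:2}: reduce to standard axis-parallel boxes, decompose each box into elementary Halton cells, and on each cell apply a one-dimensional discrepancy bound for the restricted Kronecker-like subsequence. Since we no longer assume bounded partial quotients of $L$, the role of Proposition~\ref{prop:1} is taken over by Lemma~\ref{lem:6}, which gives the bound $p^{\deg(A_{H+1}(BL))}$ for the initial $p^m$ points of a shifted Kronecker sequence determined by $BL$ whenever $d_H(BL)\le m<d_{H+1}(BL)$.

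First I would reduce $D_N$ to boxes $S=[0,\gamma)\times\prod_{j=1}^t[0,\delta_j)$, expand each $\delta_j$ in base $p^{e_j}$, and split $S$ into the disjoint union of elementary cells
$$I(l_1,\ldots,l_t)=[0,\gamma)\times\prod_{j=1}^t\left[\alpha_j,\alpha_j+\tfrac{1}{p^{l_je_j}}\right)$$
exactly as in Theorem~\ref{thm:2}, indexed by tuples $(l_1,\ldots,l_t)$. Tuples outside the range $1\le l_j\le\lfloor\log_pN\rfloor$ (e.g.\ some $l_j=0$ or $\sum_je_jl_j$ too large) are absorbed into the $\log^tN$ slack by the same trivial estimates used for $\Sigma_2$ in the proof of Theorem~\ref{thm:2}.

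For a fixed elementary cell, set $e:=\sum_je_jl_j$ and $B(X):=\prod_jb_j^{l_j}(X)$. The Halton characterization and Lemma~\ref{lem:2} combine to say that the indices $n$ in any window $Kp^{u+e}\le n<(K+1)p^{u+e}$ with $\bsy_n$ in the Halton cell are parametrized by $n(X)=(r(X)+X^uC_K(X))B(X)+R(X)$ with $r$ ranging freely over polynomials of degree $<u$, and the corresponding $x_n$ equals $\{r(X)(BL)+V_K(X)\}$ for a shift $V_K(X)$ depending only on $K$. Thus the hybrid points in the cell trace out, on each such window, the initial $p^u$ points of a \emph{shifted} Kronecker-like sequence determined by $BL$, to which Lemma~\ref{lem:6} applies directly. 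Writing $N=\sum_{i=0}^MN_ip^i$ in base $p$ with $M=\lfloor\log_pN\rfloor$ and partitioning $\{0,\ldots,N-1\}$ into the corresponding successive blocks of sizes $N_ip^i$, a block of length $p^i$ with $i\ge e$ contributes at most $p^{\deg(A_{h+1}(BL))}$ to $|A_N(I)-N\lambda(I)|$, where $h$ satisfies $d_h(BL)\le i-e<d_{h+1}(BL)$; the $N_i<p$ repetitions are absorbed in the implicit constant. Blocks with $i<e$ contain at most one admissible index each and contribute an $O(\log N)$ tail that merges into the $\log^tN$ slack after summing over cells. Since $i-e$ ranging over $\{0,\ldots,M-e\}$ hits exactly $d_{h+1}(BL)-d_h(BL)=\deg(A_{h+1}(BL))$ values for any given $h$, the $i$-sum collapses to $\sum_h\deg(A_h(BL))\,p^{\deg(A_h(BL))}$, and summing over the cells $(l_1,\ldots,l_t)$ yields the asserted bound.

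The main obstacle will be the uniform handling of the shift $V_K(X)$ across consecutive windows, so that the same one-dimensional estimate applies to every window regardless of $K$; this is built into Lemma~\ref{lem:6}, which already permits an arbitrary additive Laurent series $V(X)$. A secondary technical point is to verify that the boundary tuples and the tail $h>\lfloor\log_pN\rfloor$ are genuinely absorbed into the $\log^tN$ slack, which follows from the inequality $d_h(BL)\ge h$ together with the trivial counting estimates used in Theorem~\ref{thm:2}.
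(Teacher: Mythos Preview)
Your proposal is correct and follows essentially the same route as the paper's proof: the same box decomposition, the same split into $\Sigma_1$/$\Sigma_2$, the same use of Lemma~\ref{lem:2} to identify the restricted indices with a shifted Kronecker sequence in $BL$, and the same application of Lemma~\ref{lem:6} block-by-block along the base-$p$ expansion of $N$. Your observation that the $i$-sum collapses via $d_{h+1}(BL)-d_h(BL)=\deg(A_{h+1}(BL))$ is exactly what the paper leaves implicit in its sentence ``it is not so hard to see that the sum in the proposition \ldots\ is a proper upper bound.''

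One small slip: you write that the blocks with $i<e$ ``contain at most one admissible index each and contribute an $O(\log N)$ tail,'' but an $O(\log N)$ contribution per cell summed over $\ll\log^tN$ cells would give $\log^{t+1}N$, overshooting the stated $\log^tN$ slack. The fix is the one the paper uses: treat the entire residual range $\{n:N-M_1\le n<N\}$ of length $M_1<p^e$ as a single chunk, which contains at most one index with $n(X)\equiv R(X)\pmod{B(X)}$, hence contributes $O(1)$ per cell and $O(\log^tN)$ in total.
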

\begin{proof}
We start with an arbitrary subinterval $S:=[0,\alpha)\times \prod_{j=1}^t[0,\beta_j)\subseteq[0,1)^{t+1}$. 
 
We write $\beta_j$ in base $p^{e_j}$: $\beta_j =\sum_{i=1}^\infty \frac{\beta_{j,i}}{p^{e_ji}}$. Define $\eta_j:=\lfloor\log_{p^{e_j}}N\rfloor$, $z_{j,0}=0$, $z_{j,l}=\sum_{i=1}^l \frac{\beta_{j,i}}{p^{e_ji}}$ for $l=1,\ldots,\eta_j+1$, and $z_{j,\eta_j+2}=\beta_j$. Then 
$$S=\bigcup_{l_1=1}^{\eta_1+2}\cdots \bigcup_{l_t=1}^{\eta_t+2}\underbrace{[0,\alpha)\times \prod_{j=1}^t[z_{j,l_j-1},z_{j,l_j}]}_{I(l_1,\ldots,l_s)}$$
and 
$$|A_N(S)-N\lambda(S)|\leq \sum_{l_1=1}^{\eta_1+2}\cdots \sum_{l_t=1}^{\eta_t+2}|A_N(I(l_1,\ldots,l_s))-N\lambda(I(l_1,\ldots,l_s))|=:\Sigma_1+\Sigma_2$$
where $\Sigma_1$ sums over all $(l_1,\ldots,l_t)$ such that $\sum_{j=1}^te_jl_j\leq \log_p N$ and $\Sigma_2$ over the rest. Note that both have at most $\prod_{j=1}^t(\eta_j+2)\ll_{p,t,b_1,\ldots,b_t} \log^t N $ summands. 

Let us first consider $\Sigma_2$: We split
\begin{align*}
I(l_1,\ldots,l_t)&=[0,\alpha)\times \prod_{j=1}^t\left[z_{j,l_j-1},z_{j,l_j}\right)\\
&=\bigcup_{c_1=0}^{\beta_{1,l_1}}\cdots \bigcup_{c_t=0}^{\beta_{t,l_t}}\underbrace{[0,\alpha)\times \left[\sum_{i=1}^{l_j-1} \frac{\beta_{j,i}}{p^{ie_j}}+\frac{c_j}{p^{l_je_j}},\sum_{i=1}^{l_j-1} \frac{\beta_{j,i}}{p^{ie_j}}+\kappa_{j,l_j,c_j}\right)}_{=:J(c_1,\ldots,c_t)}
\end{align*}
with $\kappa_{j,l_j,c_j}=\beta_j-\sum_{i=1}^{l_j} \frac{\beta_{j,i}}{p^{ie_j}}$ if $l_j=n_j+2$ and $c_j=\beta_{j,l_j}$, $\kappa_{j,l_j,c_j}=\frac{c_j}{p^{l_je_j}}$ if $l_j<n_j+2$ and $c_j=\beta_{j,l_j}$, and $\kappa_{j,l_j,c_j}=\frac{c_j+1}{p^{l_je_j}}$ else. 
Thus  
\begin{align*}
|A_N(I(l_1,\ldots,l_t))-N\lambda(I(l_1,\ldots,l_t))|&\leq \sum_{c_1=0}^{\beta_{1,l_1}}\cdots \sum_{c_t=0}^{\beta_{t,l_t}}|A_N(J(c_1,\ldots,c_t))-N\lambda(J(c_1,\ldots,c_t))|\\
&\leq \sum_{c_1=0}^{\beta_{1,l_1}}\cdots \sum_{c_t=0}^{\beta_{t,l_t}}\max(A_N(J(c_1,\ldots,c_t)),N\lambda(J(c_1,\ldots,c_t)))\\
&\leq \sum_{c_1=0}^{\beta_{1,l_1}}\cdots \sum_{c_t=0}^{\beta_{t,l_t}}1\\
&\ll_{p,e_1,\ldots,e_s}1
\end{align*}
where we used that $\lambda(J(c_1,\ldots,c_t))\leq 1/N$ and $A_N(J(c_1,\ldots,c_t))\leq 1$. 

Altogether 
$$\Sigma_2\ll_{p,t,b_1,\ldots,b_s} \log^t N.$$
It remains to estimate $\Sigma_1$: 
Here 
\begin{align*}
I(l_1,\ldots,l_t)&=[0,\alpha)\times \prod_{j=1}^t\left[\sum_{i=1}^{l_j-1} \frac{\beta_{j,i}}{p^{ie_j}},\sum_{i=1}^{l_j} \frac{\beta_{j,i}}{p^{ie_j}}\right)\\
&=\bigcup_{c_1=0}^{\beta_{1,l_1}-1}\cdots \bigcup_{c_t=0}^{\beta_{1,l_t}-1}\underbrace{[0,\alpha)\times \prod_{j=1}^t\left[\sum_{i=1}^{l_j-1} \frac{\beta_{j,i}}{p^{ie_j}}+\frac{c_j}{p^{l_je_j}},\sum_{i=1}^{l_j-1} \frac{\beta_{j,i}}{p^{ie_j}}+\frac{c_j+1}{p^{l_je_j}}\right)}_{=:J(c_1,\ldots,c_t)}.
\end{align*}
Hence 
\begin{align*}
|A_N(I(l_1,\ldots,l_t))-N\lambda(I(l_1,\ldots,l_t))|&\leq \sum_{c_1=0}^{\beta_{1,l_1}-1}\cdots \sum_{c_t=0}^{\beta_{1,l_t}-1}|A_N(J(c_1,\ldots,c_t))-N\lambda(J(c_1,\ldots,c_t))|.
\end{align*}
Now we have $\bsy_n\in\prod_{j=1}^t\left[\sum_{i=1}^{l_j-1} \frac{\beta_{j,i}}{p^{ie_j}}+\frac{c_j}{p^{l_je_j}},\sum_{i=1}^{l_j-1} \frac{\beta_{j,i}}{p^{ie_j}}+\frac{c_j+1}{p^{l_je_j}}\right)$ if and only if 
\begin{equation}\label{equ:star}n(X)\equiv R(X)\pmod{\underbrace{\prod_{j=1}^tb_j(X)^{l_j}}_{B(X)}}\end{equation}
where $R(X)$ is determined by the $\beta_{i,j}$ and $c_j$. We set $e:=\sum_{j=1}^t{e_jl_j}$ and write $$N=N_0+N_1p+N_2p^2+\cdots+N_{e-1}p^{e-1}+N_{e}p^{e}+N_{e+1}p^{e+1}+\cdots+N_{\lfloor\log_p N\rfloor}p^{\lfloor\log_p N\rfloor}.$$

We regard the indizes $n=0,1,\ldots, p^{\lfloor\log_p N\rfloor}-1$. Then for the $p^{\lfloor\log_p N\rfloor-e}$ points $x_n$ with $n$ satisfying \eqref{equ:star}, by Lemma \ref{lem:2} and \ref{lem:6}, we obtain
\begin{align*}
|A_{p^{\lfloor\log_p N\rfloor-e}}([0,\alpha))-p^{\lfloor\log_p N\rfloor-e}\lambda([0,\alpha))|&\leq p^{\lfloor\log_p N\rfloor-e}D^*_{p^{\lfloor\log_p N\rfloor-e}}(\{k(X)B(X)L(X)+V(X)\})\\&\leq p^{\deg(A_{H+1}(BL))}
\end{align*}
for some $V(X)$, 
where $H$ is such that $d_H\leq \lfloor\log_p N\rfloor-e<d_{H+1}$. 
We proceed step by step and end up if $N_{e}\geq 1$ with  $n=(N_{e}-1)p^{e}+N_{e+1}p^{e+1}+\cdots+N_{\lfloor\log_p N\rfloor}p^{\lfloor\log_p N\rfloor}, (N_{e}-1)p^{e}+N_{e+1}p^{e+1}+\cdots+N_{\lfloor\log_p N\rfloor}p^{\lfloor\log_p N\rfloor}+1,\ldots, (N_{e}-1)p^{e}+N_{e+1}p^{e+1}+\cdots+N_{\lfloor\log_p N\rfloor}p^{\lfloor\log_p N\rfloor}+p^{e}-1$. Then for the one point $x_n$ with $n$ satisfying \eqref{equ:star} we have
$$|A_1([0,\alpha)-1\lambda([0,\alpha))|\leq 1 \leq p^{\deg(A_{1}(BL))}.$$
Note the fact that $d_{h+1}-d_h=\deg(A_{h+1}(BL))$, then trivially $H\leq \log_p N$, and note also the fact that $l_j\leq \log_p N$. Thus, it is not so hard to see that the sum in the proposition together with the implied constant is a proper upper bound. 

For the last $N_0+N_1p+N_2p^2+\cdots+N_{e-1}p^{e-1}$ points we obtain 
$$|A_{N_0+N_1p+N_2p^2+\cdots+N_{e-1}p^{e-1}}(J(c_1,\ldots,c_t))-(N_0+N_1p+N_2p^2+\cdots+N_{e-1}p^{e-1})\lambda(J(c_1,\ldots,c_t))|\leq 1. $$
Those terms end up in $\ll \log^t N$. 

\end{proof}

The second core result for the proof of Theorem~\ref{thm:4} is the following. 

\begin{prop}\label{prop:3} We have
$$\sum_{h=1}^{\lfloor\log_pN\rfloor}\sum_{l_1=1}^{\lfloor\log_pN\rfloor}\cdots \sum_{l_t=1}^{\lfloor\log_pN\rfloor}\deg(A_h(b_1^{l_1}\cdots b_t^{l_t}L))p^{\deg(A_h(b_1^{l_1}\cdots b_t^{l_t}L))}\ll_{t,L,\epsilon,p}\log^{t+1+\epsilon} N$$
for all $N>1$ and all $\epsilon>0$ for almost all $L\in\mathcal{H}$ in the sense of Haar-measure. 
\end{prop}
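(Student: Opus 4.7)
The plan is to combine the tail distribution of the continued-fraction partial quotients of a Haar-random $L$ with a Borel-Cantelli argument applied along an exponentially sparse subsequence of $N$. First I will derive, from Lemma~\ref{lem:3} together with the identity $\sum_{B \in \mathcal{P}} p^{-2 \deg B} = 1$, the explicit tail bound
\[
h\bigl(\{L \in \mathcal{H} : \deg(A_h(L)) \geq d\}\bigr) = p^{1-d}, \qquad d \geq 1,\ h \geq 1,
\]
and then invoke Lemma~\ref{lem:4} (using that $A_h(BL) = A_h(\{BL\})$ for $h \geq 1$) to transfer it to $\deg(A_h(BL))$ for every fixed nonzero $B \in \FF_p[X]$.

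I then work along the subsequence $N_j := p^{M_j}$ with $M_j := 2^j$. Writing $Z_{h, l_1, \ldots, l_t} := \deg(A_h(b_1^{l_1}\cdots b_t^{l_t} L))$ and choosing a cutoff $D_j := \lceil (t+3) j / \log_2 p \rceil$, a union bound over the $M_j^{t+1}$ indices yields
\[
h\bigl(\{L : \max_{h, l_i \leq M_j} Z_{h, l_1, \ldots, l_t} > D_j\}\bigr) \leq M_j^{t+1}\, p^{1-D_j} \ll 2^{-2j},
\]
which is summable, so a first Borel-Cantelli argument forces $\max_{h, l_i \leq M_j} Z_{h, l_1, \ldots, l_t} \leq D_j$ for almost every $L$ and all sufficiently large $j$.

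Next I bound the truncated sum $S_j := \sum_{h, l_i \leq M_j} Z_{h, l_1, \ldots, l_t}\, p^{Z_{h, l_1, \ldots, l_t}}\, \mathbf{1}_{\{Z_{h, l_1, \ldots, l_t} \leq D_j\}}$ in expectation: a termwise computation gives $E[Z p^Z \mathbf{1}_{\{Z \leq D_j\}}] = (p-1)\sum_{d=1}^{D_j} d = (p-1)D_j(D_j+1)/2$, so $E[S_j] = O(M_j^{t+1} j^2)$. With $T_j := M_j^{t+1+\epsilon}$, Markov's inequality gives $h(S_j > T_j) \ll j^2/2^{j\epsilon}$, again summable, so a second Borel-Cantelli step forces $S_j \leq T_j$ almost surely for all large $j$. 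On the intersection of the two good events the full sum at $N_j$ coincides with $S_j$ and is bounded by $(\log_p N_j)^{t+1+\epsilon}$. For arbitrary $N$, I pick $j$ with $N_{j-1} \leq N < N_j$; since every summand is non-negative and the index ranges are monotone in $N$, the full sum at $N$ is bounded by $T_j \leq 2^{t+1+\epsilon}(\log_p N)^{t+1+\epsilon}$.

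The main obstacle is that $E[Z p^Z] = \infty$ under this tail distribution, so neither a naive Markov estimate on the untruncated sum, nor a Borel-Cantelli along the dense subsequence $N = p^k$, is tight enough: the expected truncated sum is $\Theta(M^{t+1}\log^2 M)$, and only an exponentially spaced subsequence $M_j = 2^j$ makes the resulting $O(j^2/M_j^\epsilon)$ Markov tail summable in $j$. Monotone interpolation between consecutive $N_j$ costs only a constant depending on $t$ and $\epsilon$, and the $b_i$-independence of the implied constant is inherited from the measure-preservation in Lemma~\ref{lem:4}.
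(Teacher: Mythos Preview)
Your argument is correct. The tail identity $h(\deg A_h(L)\geq d)=p^{1-d}$ follows from Lemma~\ref{lem:3} exactly as you indicate, and the transfer to $A_h(BL)$ via Lemma~\ref{lem:4} is legitimate because $A_h(BL)=A_h(\{BL\})$ for $h\geq 1$. The two Borel--Cantelli steps along $M_j=2^j$ are sound: the first forces $\max Z\leq D_j$ eventually, so the truncated sum $S_j$ equals the full sum, and the second uses $E[Zp^Z\mathbf{1}_{Z\leq D_j}]=(p-1)D_j(D_j+1)/2=O(j^2)$ together with Markov to make the error probabilities summable. The monotone interpolation from $N_{j-1}\leq N<N_j$ loses only the factor $2^{t+1+\epsilon}$, and the finitely many small $j$ are absorbed into the $L$-dependent constant. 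One small point you do not spell out: to get the statement ``for all $\epsilon>0$'' on a single full-measure set you should intersect over a countable sequence $\epsilon_n\downarrow 0$; the paper leaves this implicit as well.

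Your route differs from the paper's. The paper restricts to the set $E_C=\{L:p^{\deg A_h(b_1^{l_1}\cdots b_t^{l_t}L)}<C(\overline l_1\cdots\overline l_t h)^4\text{ for all indices}\}$, shows $h(E_C)\to 1$, computes $\int_{E_C}\deg(A_k)p^{\deg(A_k)}\,dh\ll\log^2(\overline l_1\cdots\overline l_t k)$ termwise, and then weights the full sum by $(\overline l_1\cdots\overline l_t h)^{-1-\epsilon/(t+1)}$ so that the integral is bounded uniformly in the upper summation limit; monotone convergence then gives a.e.\ finiteness of the weighted infinite sum, from which the unweighted bound follows. Your first Borel--Cantelli plays the role of the restriction to $E_C$, while your sparse-subsequence Markov argument replaces the weight-and-unweight trick. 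The paper's approach gives slightly more, namely a uniform polynomial bound on every individual $p^{\deg A_h}$ for $L\in E_C$, whereas yours is more direct and avoids introducing the auxiliary sets $E_C$; both handle the obstacle $E[Zp^Z]=\infty$ by truncation, but package the passage from $L^1$ to a.e.\ differently.
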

\begin{proof}
Let 
$$E_C:=\{L\in\mathcal{H}:p^{\deg(A_h(b_1^{l_1}\cdots b_t^{l_t}L))}< C(\overline{l}_1\cdots \overline{l}_th)^4 \mbox{ for all $l_1,\ldots,l_t\in\NN_0$ and all $h\in\NN$}\},$$
where $\overline{l}=\max(1,l)$. 

We first show $\lim_{C\to\infty}h(E_C)=1$. 

The complement $\overline{E}_C$ of $E_C$ contains all $L$ for which there exists $h_0$ and $l_{1,0},\ldots,l_{t,0}$ such that $$p^{\deg(A_{h_0}(b_1^{l_{1,0}}\cdots b_t^{l_{t,0}}L))}\geq C(\overline{l}_{1,0}\cdots \overline{l}_{t,0}h_{0})^4.$$
For such $L$ denote by $h_0(L)$ the minimal such $h_0$, by $l_{1,0}(L)$ the minimal $l_{1,0}$ for given $h_0(L)$, further by $l_{2,0}(L)$ the minimal $l_{2,0}$ for given $h_0(L)$ and $l_{1,0}(L)$ and so on. Then 
$$\overline{E}_C=\bigcup_{h_0=1}^\infty\bigcup_{l_{1,0}=0}^\infty \cdots\bigcup_{l_{t,0}=0}^\infty \left\{L\in\overline{E}_C:h_0(L)=h_0,\,l_{1,0}(L)=l_{1,0},\,\ldots, l_{t,0}(L)=l_{t,0} \right\}.$$
We consider the Haar-measure of $$S= \left\{L\in\overline{E}_C:h_0(L)=h_0,\,l_{1,0}(L)=l_{1,0},\,\ldots, l_{t,0}(L)=l_{t,0} \right\}.$$
Let $L\in S$ then the $h_0-1$st convergent of $b_1^{l_{1,0}}\cdots b_t^{l_{t,0}}L$ satisfies 
$$\nu(b_1^{l_{1,0}}\cdots b_t^{l_{t,0}}L-\frac{P_{h_0-1}}{Q_{h_0-1}})=-2\deg(Q_{h_0-1})-\deg(A_{h_0}(b_1^{l_{1,0}}\cdots b_t^{l_{t,0}}L)).$$
Then 
$$\nu(L-\frac{A}{b_1^{l_{1,0}}\cdots b_t^{l_{t,0}}Q_{h_0-1}})=-2\deg(Q_{h_0-1})-\deg(A_{h_0}(b_1^{l_{1,0}}\cdots b_t^{l_{t,0}}L))-\deg(b_1^{l_{1,0}}\cdots b_t^{l_{t,0}})$$
for some $A$ with $\deg(A)<\deg(Q_{h_0-1})+\deg(b_1^{l_{1,0}}\cdots b_t^{l_{t,0}})$. 
This means $L$ is contained in a set of Haar measure 
\begin{align*}
p^{-\deg(Q_{h_0-1})-A_{h_0}(b_1^{l_{1,0}}\cdots b_t^{l_{t,0}}L)}&\leq  p^{-\deg(Q_{h_0-1})}\frac{1}{C(\overline{l}_{1,0}\cdots \overline{l}_{t,0}h_0)^4}
\end{align*}
where we used $\deg(A_{h_0}(b_1^{l_{1,0}}\cdots b_t^{l_{t,0}}L))\geq \log_{p}(C(\overline{l}_{1,0}\cdots \overline{l}_{t,0}h_0)^4)$. 
Note that we have $\deg(Q_{h_0-1})=\sum_{i=1}^{h_0-1}\deg(A_i(b_1^{l_{1,0}}\cdots b_t^{l_{t,0}}L))\leq h_0\log_{p}(C(\overline{l}_{1,0}\cdots \overline{l}_{t,0}h_0)^4)$. Hence 
\begin{align*}
h(S)&<\sum_{Q\in\mathcal{P},\deg(Q)\leq h_0\log_{p}(C(\overline{l}_{1,0}\cdots \overline{l}_{t,0}h_0)^4)^4)}p^{-\deg(Q)}\frac{1}{C(\overline{l}_{1,0}\cdots \overline{l}_{t,0}h_0)^4}\\
&<(p-1)\frac{h_0\log_{p}(C(\overline{l}_{1,0}\cdots \overline{l}_{t,0}h_0)^4)}{C(\overline{l}_{1,0}\cdots \overline{l}_{t,0}h_0)^4}.
\end{align*}
Hence 
\begin{align*}
h(\overline{E}_C)&\leq \sum_{h=1}^\infty \sum_{l_1=0}^\infty\dots\sum_{l_t=0}^\infty (p-1)\frac{h\log_{p}(C(\overline{l}_1\cdots \overline{l}_th)^4)}{C(\overline{l}_1\cdots \overline{l}_th)^4}\ll_p\log C/C  
\end{align*}
and the assertion $\lim_{C\to\infty}h(E_C)=1$ holds. \\

Next we show $\int_{E_C}\deg(A_k(b_1^{l_{1}}\cdots b_t^{l_{t}}L))p^{\deg(A_k(b_1^{l_{1}}\cdots b_t^{l_{t}}L))}dh(L)\ll_{p,C} \log^2_p((\overline{l}_1\cdots\overline{l}_th))$: 

We define elementary intervals 
$$I_{B_1,\ldots,B_k}^{(l_1,\ldots,l_s)}:=\{L\in\mathcal{H}:A_h(b_1^{l_{1}}\cdots b_t^{l_{t}}L)=B_h\mbox{ for all $h=1,\ldots,k$}\}.$$
By Lemma \ref{lem:3} and \ref{lem:4} we have 
$$h(I_{B_1,\ldots,B_k}^{(l_1,\ldots,l_s)})=h(I_{B_1,\ldots,B_k}^{(0,\ldots,0)})=p^{-2(\deg(B_1)+\cdots+\deg(B_k))}$$
and the identity $$h(I_{B_1,\ldots,B_k}^{(l_1,\ldots,l_s)})=p^{-2(\deg(B_k))}h(I_{B_1,\ldots,B_{k-1}}^{(l_1,\ldots,l_s)}).$$
Hence by definition $\int_{E_C}\deg(A_k(b_1^{l_{1}}\cdots b_t^{l_{t}}L))p^{\deg(A_k(b_1^{l_{1}}\cdots b_t^{l_{t}}L))}dh(L)$
\begin{align*}
&\leq \sum_{B_k\in\mathcal{P},\deg(B_k)\leq \log_p(C(\overline{l}_1\cdots\overline{l}_tk)^4)}\sum_{B_1\in\mathcal{P}}\cdots\sum_{B_{k-1}\in\mathcal{P}}\int_{I_{B_1,\ldots,B_k}^{(l_1,\ldots,l_s)}}\deg(A_k(b_1^{l_{1}}\cdots b_t^{l_{t}}L))p^{\deg(A_k(b_1^{l_{1}}\cdots b_t^{l_{t}}L))}dh(L)\\
&=\sum_{B_k\in\mathcal{P},\deg(B_k)\leq \log_p(C(\overline{l}_1\cdots\overline{l}_tk)^4)}\sum_{B_1\in\mathcal{P}}\cdots\sum_{B_{k-1}\in\mathcal{P}}\deg(B_k)p^{\deg(B_k)}h(I_{B_1,\ldots,B_k}^{(l_1,\ldots,l_s)})\\
&=\sum_{B_k\in\mathcal{P},\deg(B_k)\leq \log_p(C(\overline{l}_1\cdots\overline{l}_tk)^4)}\deg(B_k)p^{-(\deg(B_k))}\sum_{B_1\in\mathcal{P}}\cdots\sum_{B_{k-1}\in\mathcal{P}}h(I_{B_1,\ldots,B_{k-1}}^{(l_1,\ldots,l_s)})\\
&=\sum_{B_k\in\mathcal{P},\deg(B_k)\leq \log_p(C(\overline{l}_1\cdots\overline{l}_tk)^4)}\deg(B_k)p^{-(\deg(B_k))}\\
&\leq\log_p(C(\overline{l}_1\cdots\overline{l}_tk)^4)\sum_{r=1}^{\log_p(C(\overline{l}_1\cdots\overline{l}_tk)^4)}(p-1)p^rp^{-r}\\
&\ll_{t,p,C} \log^2_p(\overline{l}_1\cdots\overline{l}_tk).
\end{align*}

Now we estimate 
\begin{align*}
\int_{E_C}\sum_{h=1}^{H}\sum_{l_1=1}^{H}\cdots \sum_{l_t=1}^{H}\frac{\deg(A_h(b_1^{l_1}\cdots b_t^{l_t}L))p^{\deg(A_h(b_1^{l_1}\cdots b_t^{l_t}L))}}{(\overline{l}_1\cdots \overline{l}_s  h)^{1+\epsilon/(t+1)}}dh(L)&\ll_{t,p,C} \sum_{h=1}^{H}\sum_{l_1=1}^{H}\cdots \sum_{l_t=1}^{H}\frac{\log^2_p(\overline{l}_1\cdots\overline{l}_th)}{(\overline{l}_1\cdots \overline{l}_s  h)^{1+\epsilon/(t+1)}}\\&\ll_{t,p,C,\epsilon}1.
\end{align*}
Since the integrand above is monotonically increasing in $H$, we have for almost all $L\in\mathcal{H}$ in the sense of Haar measure that 
$$\sum_{h=1}^{H}\sum_{l_1=1}^{H}\cdots \sum_{l_t=1}^{H}\frac{\deg(A_h(b_1^{l_1}\cdots b_t^{l_t}L))p^{\deg(A_h(b_1^{l_1}\cdots b_t^{l_t}L))}}{(\overline{l}_1\cdots \overline{l}_s  h)^{1+\epsilon/(t+1)}}\ll_{t,p,C,\epsilon}1,$$
and hence for almost all $L\in\mathcal{H}$
$$\sum_{h=1}^{H}\sum_{l_1=1}^{H}\cdots \sum_{l_t=1}^{H}{\deg(A_h(b_1^{l_1}\cdots b_t^{l_t}L))p^{\deg(A_h(b_1^{l_1}\cdots b_t^{l_t}L))}}\ll_{t,p,C,\epsilon}H^{t+1+\epsilon}$$
for all $H$. 
\end{proof}

Theorem~\ref{thm:4} is now an immediate consequence of Proposition~\ref{prop:2}
and \ref{prop:3}.

\noindent{Roswitha Hofer, Institute of Financial Mathematics and Applied Number Theory, Johannes Kepler University Linz, Altenbergerstr. 69, 4040 Linz, AUSTRIA, roswitha.hofer@jku.at}

\end{document}